\documentclass[reqno, 11 pt]{amsart}%
\usepackage{amsthm}
\usepackage{amsmath}
\usepackage{amsfonts}
\usepackage{txfonts}
\usepackage{amssymb}
\usepackage{dsfont}
\usepackage{esint}
\usepackage{dsfont}
\usepackage{graphicx}
\usepackage{color}
\usepackage{pdfrender,xcolor}
\usepackage{lmodern}
\usepackage{setspace}
\usepackage{bm}
\usepackage{url}
\usepackage[utf8]{inputenc}
\usepackage[T1]{fontenc}
\usepackage[english]{babel}
\usepackage{pdfrender,xcolor}
\usepackage{lmodern}
\usepackage{bm}
\usepackage{comment}
\usepackage[margin=0.8in]{geometry}
\usepackage{enumerate}
\usepackage{enumitem}
\usepackage{amsaddr}

\usepackage{empheq}%
\usepackage{mathtools}
\setcounter{MaxMatrixCols}{30}
\providecommand{\U}[1]{\protect\rule{.1in}{.1in}}
\numberwithin{equation}{section}


\newcommand{\pt}{\partial_t}
\newcommand{\bu}{\boldsymbol{u}}
\newcommand{\ba}{\boldsymbol{a}}

\newcommand{\bc}{\boldsymbol{c}}

\newcommand{\bv}{\boldsymbol{v}}

\newcommand{\bg}{\boldsymbol{g}}
\newcommand{\bD}{\textbf{D}}
\newcommand{\bH}{\textbf{H}}

\usepackage{mathrsfs}
\usepackage{amsbsy}




\newcommand{\Dujl}{\textbf{D}(\boldsymbol{u}^{j,l})}
\newcommand{\Dujlt}{\textbf{D}(\boldsymbol{u}^{j,l}(t))}

\newcommand{\Duj}{\textbf{D}(\boldsymbol{u}^j)}

\newcommand\nut{\nu_{\operatorname{turb}}}



\newtheorem{theorem}{Theorem}

\newtheorem{lemma}{Lemma}
\newtheorem{proposition}{Proposition}

\newtheorem{remark}{Remark}

\newcommand\R{\mathds{R}}
\setlength{\parindent}{0em} \setlength{\parskip}{0.75em}
 \setcounter{tocdepth}{1}


\allowdisplaybreaks

\newcommand\restr[2]{{
  \left.\kern-\nulldelimiterspace 
  #1 
  \vphantom{\big|} 
  \right|_{#2} 
  }}





\usepackage{mathrsfs,hyperref}

\usepackage[sort,nocompress]{cite} 

\let\origmaketitle\maketitle
\def\maketitle{
  \begingroup
  \def\uppercasenonmath##1{} 
  \let\MakeUppercase\relax 
  \origmaketitle
  \endgroup
}

\begin{document}

\inputencoding{utf8}

\title[Existence with unbounded turbulent-depending coefficients]{Existence for turbulent flows through permeable media with unbounded turbulent-depending coefficients}

\author[H.B.~de~Oliveira]{Hermenegildo Borges de Oliveira$^{(1, 2)}$}

\begingroup
\let\center\flushleft
\let\endcenter\endflushleft
\address{$^{(1)}$FCT - Universidade do Algarve, Faro, Portugal\\
         $^{(2)}${{CMAFcIO}} - Universidade de Lisboa, Lisboa, Portugal}
\endgroup

\email{holivei@ualg.pt}
\date{\today}

\selectlanguage{english}

\begin{abstract}
\noindent
A mathematical model that governs turbulent flows through permeable media is considered in this work.
The model under consideration is based on a double-averaging concept which in turn is described by the time-averaging technique characteristic of the turbulence k$-$epsilon  model and by the volume-averaging methodology that is used to model unstable flows through porous media.
The functions of turbulence viscosity, turbulence diffusion and turbulence production are assumed to be unbounded with respect to the turbulent kinetic energy.
For the associated initial-and boundary-value problem, we prove the existence of suitable weak solutions.

\bigskip

\noindent \textbf{Key words and phrases:} turbulence, $k-$epsilon modelling, permeable media, existence

\bigskip

\noindent \textbf{2020 MSC:} 76F60, 76S05, 35Q35, 35K55, 35A01, 76D03
\end{abstract}

\maketitle

\section{Introduction}\label{Sect-Int}

In this work we study turbulent flows in a permeable medium with a hydraulic diameter large enough so that the fluid can be considered in the turbulent regime.
The most used approach to study turbulence in permeable media is based on $k-$epsilon modeling.
By this approach, macroscopic turbulence models for incompressible single-phase flow in rigid and fully saturated permeable media are derived using two distinct averaging concepts.
The Reynolds-averaged Navier-Stokes (RANS) equations are first developed at the microscale by time-averaging the incompressible Navier-Stokes equations.
Then, by volume-averaging the RANS equations, we obtain a macroscale equation for the evolution of turbulence.
The total drag effect due to the skeleton of the permeable medium is modeled only after applying the two average concepts.
Proceeding in this way, we obtain the same set of equations regardless of the order in which the two average concepts are applied~\cite{PL:2000,Vafai:2005,WHA:2020}.
In light of this double decomposition approach, we consider in this work the following general one-equation turbulence model,
\begin{eqnarray}
&& \partial_t{\bu}+({\bu}\cdot\nabla){\bu}- \mathbf{div}\left(\nut(k)\mathbf{D}({\bu})\right) + \nabla p=\mathbf{g}-c_{Da}{\bu}-c_{Fo}|{\bu}|^{\alpha-2}{\bu}\qquad \mbox{in}\ \ Q_T, \label{problem1}\\
&& \operatorname{div}{\bu}=0\qquad \mbox{in}\ \ Q_T,\label{problem2}\\
&& \partial_tk+{\bu}\cdot\nabla k -\operatorname{div}(\nu_D(k)\nabla k)=\nut(k)|\mathbf{D}({\bu})|^2+\nu_P(k)|{\bu}|^\beta-\varepsilon(k)\qquad \mbox{in}\ \ Q_T,\label{problem3}\\
&& {\bu}={\bu}_0\quad\mbox{and}\quad k=k_0\qquad \mbox{in}\ \ \Omega\times\{0\}, \label{problem4}\\
&& {\bu} =\mathbf{0}\quad\mbox{and}\quad k=0\qquad \mbox{on}\ \ \Gamma_T,  \label{problem5}
\end{eqnarray}
where $Q_T:=\Omega\times(0,T)$ is a space-time cylinder with lateral boundary $\Gamma_T:=\partial\Omega\times(0,T)$, being $\Omega\subset\mathds{R}^d$ a bounded  domain (open and connected) with its boundary denoted by $\partial\Omega$, and $T$ is a given positive constant.
Despite real world problems correspond to $d=3$, and in certain particular cases $d=2$, we consider a general space dimension $d$, to be restricted later on.
In (\ref{problem1})-(\ref{problem5}), the velocity field ${\bu}$ and the pressure $p$ are, in fact, averages that result by the application of the two aforementioned averaging concepts~\cite{Lemos:book}. %
The  averaged tensor $\mathbf{D}({\bu})$ is the symmetric part of the averaged gradient $\nabla {\bu}$.
For the sake of simplifying the problem, we assume the porosity of the medium is constant which justifies writing the mean flow equation in the form (\ref{problem1}).
The symbol $\mathbf{g}$ on the r.h.s. of the mean flow equation (\ref{problem1}) stay in this work for a general (averaged) body force, for instance the gravity force.
In the same equation, the feedback terms
$$c_{Da}{\bu}+c_{Fo}|{\bu}|^{\alpha-2}{\bu}$$
account for the resistance made by the skeleton of the permeable medium to the flow.
Here, $c_{Da}$ and $c_{Fo}$ are positive constant that are experimentally determined, usually denoted as the Darcy and Forchheimer coefficients.
The exponent $\alpha$ ranges in the interval $(1,\infty)$ and is a constant that characterizes the flow.
In particular, when $\alpha=2$ we obtain solely the Darcy term which accounts for the viscous drag, and if $\alpha=3$, we obtain the superposition of the Darcy and Forchheimer terms that account for both the viscous and form drags.
The function $k$ is an unknown of the problem and is usually called turbulent kinetic energy (TKE).
By definition, we always have
\begin{equation*}
k\geq 0.
\end{equation*}
The scalar function $\nut$ is the turbulent, or eddy viscosity, that may depend on $k$, whereas $\nu_D$ is the turbulent diffusion that may also depend on $k$.
The function $\varepsilon$ describes the rate of dissipation of the TKE in the model and therefore it is denoted by
dissipation of the TKE, or, briefly, turbulent dissipation.
In standard models,
\begin{equation*}
\varepsilon(k) = \frac{k \sqrt { | k |}}{\ell}
\end{equation*}
where $\ell: Q_T\longrightarrow\mathds{R}$ is the Prandtl length scale (function) of the motion, which is usually assumed to satisfy $\ell\geq \ell_0$ a.e. in $Q_T$ for some positive constant $\ell_0$.
Therefore, without loss of generality, we can assume that
\begin{equation}\label{epslion(k):e(k)}
\varepsilon(k)=k\,e(k),
\end{equation}
with
\begin{equation}\label{f:diss-turb2}
e(k)\geq 0\quad \forall\ k \in \mathds{R}_0^+,\quad \mbox{a.e. in}\ Q_T.
\end{equation}
In particular,
\begin{equation*}
\varepsilon(k) k  \geq 0\quad \forall \, k \in \R_0^+,  \quad \mbox{a.e. in}\ Q_T.
\end{equation*}

The additional term $\nu_P(k)|{\bu}|^\beta$ in equation (\ref{problem3}) appears as an output of the averaging process, and it is a production term of turbulent kinetic energy that accounts for the solids inside the fluid. Therefore $\nu_P$ shall be called the turbulence production function.
Several expressions for the function $\nu_P$ and for the exponent $\beta$ have been considered in the applications.
In particular, for $\nu_P(k)=k$ and $\beta=1$, we recover the turbulence model~\cite{PL:2000}, and for $\nu_P(k)$ constant and $\beta=3$ we get the turbulence model \cite{NK:1999}.

Problem (\ref{problem1})-(\ref{problem5}) can be easily adapted to cover other turbulence modeling situations not directly related to permeable media~\cite{OP:2018a,O:2024a,O:2023a}.
In particular, considering zero drag forces and no turbulence production term, and assuming that the turbulent dissipation $\varepsilon(k)$ is on the order of $k^{\frac{3}{2}}$, we recover the one-equation turbulence $k$--epsilon model~\cite{CL:2014,MP:1993}.
The mathematical analysis of this model has been investigated during the last 20-30 years, although important questions, such as the case of real turbulent viscosity and turbulent diffusion functions, remain open.
In clear flow conditions, that is for turbulent flows with zero drag forces and without the producing turbulence term, we address the reader to the works~\cite{BLM:2011,D:2008,GLLMT:2003,LL:2007,L:1997,MN:2015} for questions of existence, uniqueness and regularity of the solutions.
The turbulent model studied in the present work differs from the models studied in these references in two essential aspects.
The first lies in the presence of the viscous and form drag terms, $c_{Da}{\bu}$ and $c_{Fo}|{\bu}|^{\alpha-2}{\bu}$, in the mean flow equation (\ref{problem1}).
The second results from the fact that these two terms induce the production of more turbulence, which is described in the model by the extra non-linear term $\nu_P(k)|\bu|^\beta$ in equation (\ref{problem3}).
To the best of our knowledge, the mathematical analysis of the problem (\ref{problem1})-(\ref{problem5}) began in the works~\cite{OP:2016,OP:2017,OP:2018a,O:2018b,OP:2019} , where the authors studied issues regarding to the existence of solutions to the stationary version of the problem, as well as some aspects of the regularity of these solutions.
On the other hand, the effect of the generalized Forchheimer term $|\bu|^{\alpha-2}\bu$ on the incompressible Navier-Stokes equations (in the laminar regime) has been studied in~\cite{ADO:2002,ADO:2004a,ADO:2004b,ADO:2007}, in particular to obtain the confinement of the solutions, either in space~\cite{ADO:2002,ADO:2004a,ADO:2004b} or in time~\cite{ADO:2007}.
Very recently~\cite{O:2023a,O:2024a}, the existence of suitable weak solutions to the problem (\ref{problem1})-(\ref{problem5}) was proven, under the strong constraint that turbulence functions $\nut$ , $\nu_D$ and $\nu_P$ are bounded.
The present work improves the results established in~\cite{O:2024a,O:2023a} in the sense that we are now removing the restrictions on the boundedness of the turbulent functions $\nut$, $\nu_D$ and $\nu_P$


Our problem has some resemblances with the Navier-Stokes-Fourier system governing clear flows, in the laminar regime, of incompressible fluids with temperature-dependent coefficients~\cite{BFM:2009}.
Mathematically speaking, the main difficulty of these problems lies in the first r.h.s. term of the turbulence equation \eqref{problem3} (or energy equation for the Navier-Stokes-Fourier case), which is only in $L^1$, making that passing the approximate equation of the weak formulation to the limit does not preserve the identity.
To overcome the low regularity of that nonlinear term, the authors in~\cite{BFM:2009} considered the equation that results from adding the scalar product of the momentum equation and the velocity field with the energy equation, obtaining an extra equation for a new quantity that is expressed as the sum of the kinetic energy with the internal energy.
However, in this new equation, it is not possible to get rid of the pressure, as we can in the incompressible Navier-Stokes equations.
Thus, and as the applicability of de Rham's lemma to Navier-Stokes equations with variable coefficients is still unknown, the authors~\cite{BFM:2009} preferred to work with Navier's slip boundary conditions for the velocity field.
This, together with the assumption that the boundary is, at least, $C^{1,1}$, lead to the existence of globally integrable pressure.
Furthermore, the authors recovered an inequality of the type~\eqref{weak-form-k} (see below) by making use of the second law of thermodynamics.
By these approach the authors~\cite{BFM:2009} were able to prove the long-time and large-data existence of suitable weak solutions.
The same reasoning was used in~\cite{BLM:2011} to study a one-equation $k-$epsilon model governing turbulence in clear flows.

This paper is organized as follows. In this section (Section~\ref{Sect-Int}), we have introduced the problem we shall work with and gave the motivation of the real world situation.
The main result of this work  (Theorem~\ref{thm:exist}) is presented in Section ~\ref{Sect:main}.
From Section~\ref{Sect:trunc} till Section~\ref{Sect-Att-ic} we prove Proposition~\ref{prop:exist:n} which concerns the existence of suitable weak solutions for the truncated problem.
The proof of Theorem~\ref{thm:exist} is then concluded in Sections~\ref{Sect-Est-ind(n)} and~\ref{Sect-Att-ic}.
The notation used in this work is quite standard in the field of Mathematical Fluid Mechanics.
In any case, we address the interested reader to some of the monographs cited hereinafter~\cite{CL:2014,Galdi:2011,Temam:1979}.
We just want to point out that boldface letters denote tensor-valued (capital) and vector-valued (small) functions and non-boldface letters stay for scalars.
The letters $C$, $K$ and $\aleph$ will always denote positive constants, whose values may change from line to line, but whose dependence on other parameters or data will always be clear from the exposition.
We will only emphasize their dependence on the parameters that will later be passed to the limit.
Bellow, we recall the well-know notation for the function spaces considered in the analysis of incompressible viscous fluids,
\begin{eqnarray*}
& & \mathcal{V}:=\{\bv\in C_0^{\infty}(\Omega)^d:\mathrm{div}\bv=0\} \\
& & \mathbf{H}:=\mbox{closure of $\mathcal{V}$ in $L^2(\Omega)^d$} \\
& & \mathbf{V}^s:=\mbox{closure of $\mathcal{V}$ in $W^{s,2}(\Omega)^d$},
\end{eqnarray*}
where $s\geq 1$.
For $s=1$, we use the notation $\mathbf{V}$ instead of $\mathbf{V}^1$.
Similarly, we define the scalar function space
\begin{equation*}
V:=\mbox{closure of $C_0^{\infty}(\Omega)$ in $H^1(\Omega)$}.
\end{equation*}

\section{Main result}\label{Sect:main}

In the mathematical analysis of the turbulence problem (\ref{problem1})-(\ref{problem5}), there is a set of usual assumptions that, although they do not follow from the real situation, are physically admissible, %
\begin{equation}\label{e-visc-Carath}
\nut,\ \nu_D,\ \nu_P,\ \varepsilon,\ e:Q_T\times\mathds{R}\rightarrow\mathds{R}_0^+\quad\mbox{are Carathéodory functions}.
\end{equation}
The novelty of this work lies in the hypotheses that we state next.
On the functions of turbulent viscosity $\nut$, turbulent diffusion $\nu_D$, turbulence production $\nu_P$ and turbulent dissipation $\varepsilon$, we assume that, for certain constants $\eta,\ \zeta,\ \gamma,\ \vartheta \in \mathds{R}^+_0$, there exist couples of positive constants,
$c_T$, $C_T$, $c_D$, $C_D$, $c_P$, $C_P$ and $c_\varepsilon$, $C_\varepsilon$ such that
\begin{eqnarray}
 && c_T(1+k)^\eta\leq \nut(k)\leq C_T(1+k)^\eta, \label{f:visc-turb} \\
 && c_D(1+k)^\zeta\leq \nu_D(k)\leq C_D(1+k)^\zeta, \label{f:diff-turb} \\
 && c_P(1+k)^\gamma\leq \nu_P(k)\leq C_P(1+k)^\gamma, \label{f:P-turb} \\
 && c_\varepsilon k^{\vartheta+1}\leq \varepsilon(k)\leq C_\varepsilon k^{\vartheta+1} , \label{f:dissip-turb}
 \end{eqnarray}
for all $k \in \mathds{R}_0^+$ and a.e. in $Q_T$.

We assume on the external forces field that
\begin{equation}\label{g:V'}
\mathbf{g}\in L^2(0,T;L^2(\Omega)^d),
\end{equation}
and on the initial data that
\begin{eqnarray}
&& \label{eq:cond_ini_1}
{\bu}_0 \in \mathbf{H}, \\
&& \label{eq:cond_ini_2}
k_0 \in L^1(\Omega).
\end{eqnarray}
In addition, we assume the existence of a positive constant $C_0$ such that
\begin{equation}\label{k0>C}
k_0\geq C_0>0\quad \mbox{a.e. in}\ \Omega.
\end{equation}

To ensure that the terms containing the nonlinear functions $\nut(k)$, $\nu_D(k)$, $\nu_P(k)$ and $\varepsilon(k)$ are somewhat more than $L^1$--integrable (with the exception of the first r.h.s. term of \eqref{problem3} that is only in $L^1$), it is necessary to make some assumptions on the exponents of nonlinearity set in \eqref{f:visc-turb}-\eqref{f:dissip-turb}.
For this purpose, let us set
\begin{equation}\label{ru:rk}
r_u:=\max\left\{\frac{2(d+2)}{d},\alpha\right\}, \quad  \rho_k:=\max\left\{\frac{2(d+2)}{d},\vartheta+2\right\},\quad r_k:=\zeta+1+\frac{2}{d}.
\end{equation}
We assume that
\begin{eqnarray}
&& \label{eq:Cond1}  \eta < r_k
\end{eqnarray}
to make sure that $\nut(k)\mathbf{D}({\bu}) \in L^q (0,T;L^q(\Omega)^{d \times d})$ for some $q>1$.
To ensure that $\varepsilon(k) \in L^q(0,T;L^q(\Omega))$, for some $q>1$, we assume that
\begin{eqnarray}
\label{eq:Cond2}
&&  \vartheta<\zeta+\frac{2}{d}.
\end{eqnarray}
And to make sure that  $\nu_P (k) |{\bu} |^\beta  \in L^q(0,T;L^q(\Omega))$, for some $q>1$, we assume that
\begin{eqnarray}
&&
\label{Hyp:theta:gamma}
\frac{\gamma}{\vartheta+1 } + \frac{\beta}{r_u} < 1.
\end{eqnarray}

The main result of this work is written in the following theorem.

\begin{theorem}\label{thm:exist}
Let $\Omega$ be a bounded domain of $\mathbb{R}^d$, where it is supposed that $2\leq d\leq 4$ and $\partial\Omega$ is Lipschitz-continuous.
Assume (\ref{e-visc-Carath}), (\ref{f:visc-turb})-(\ref{f:dissip-turb}), (\ref{g:V'}), \eqref{eq:cond_ini_1}-\eqref{eq:cond_ini_2} and (\ref{k0>C}), and \eqref{eq:Cond1}-\eqref{Hyp:theta:gamma} hold true.
Then, there exists a couple of functions $({\bu},k)$ such that:
\begin{enumerate}[leftmargin=*,topsep=-5pt,label=(\arabic*)]
\item ${\bu}\in L^2(0,T;\mathbf{V})\cap L^{\infty}(0,T;\mathbf{H})\cap L^{r_u}(0,T;L^{r_u}(\Omega)^d)$ for $r_u$ given in (\ref{ru:rk});
\item $k\in L^{\infty}(0,T;L^1(\Omega))\cap L^q(0,T;W^{1,q}_0(\Omega)) \cap L^r(0,T;L^r(\Omega)) \cap L^{1+ \vartheta}(0,T;L^{1+ \vartheta}(\Omega))$, for
\begin{equation}\label{w:reg:k:W1q}
\left\{
\begin{array}{ll}
q=2, & \mbox{if}\ \ \zeta>1,\ \mbox{or} \\
1<q<1+\frac{d\zeta + 1}{d+1}, & \mbox{if}\ \ 0\leq\zeta\leq 1
\end{array}\right.
\qquad\mbox{and}\qquad 1<r<r_k,\quad \mbox{with $r_k$ given in \eqref{ru:rk}};
\end{equation}
\item $k\geq C_0$ a.e. in $Q_T$;
\item $\sqrt{\nut(k)}\mathbf{D}({\bu})\in L^2(0,T;L^2(\Omega)^{d\times d})$;
\item For every $\bm{\varphi}\in C^\infty(Q_T)^d$ such that $\operatorname{div}\bm{\varphi} =0$ in $Q_T$ and $\operatorname{supp}\bm{\varphi}\subset\subset \Omega\times[0,T)$,   there holds
\begin{equation}\label{weak-form-u}
\begin{split}
&
-\int_0^T \int_{\Omega} \bu\cdot\partial_t\bm{\varphi}\,dxdt -  \int_0^T \int_{\Omega}{\bu}(t)\otimes{\bu}(t):\nabla\bm{\varphi}\,dxdt
  +  \int_0^T \int_{\Omega}\nut(k)\,\mathbf{D}({\bu}):\nabla\bm{\varphi}\,dxdt  \\
&  +  \int_0^T\int_{\Omega}\left(c_{D}+c_F|{\bu}|^{\alpha-2}\right){\bu}\cdot\bm{\varphi}\,dxdt
=  \int_{\Omega} \bu_0\cdot\bm{\varphi}(0)\,dx + \int_0^T \int_\Omega \mathbf{g}\cdot\bm{\varphi}\,dxdt;
\end{split}
\end{equation}
\item For every $w\in C^\infty(Q_T)$ such that $w \geq 0$ a.e in $Q_T$ and $\operatorname{supp}w\subset\subset\Omega\times[0,T)$, there holds
\begin{equation}\label{weak-form-k}
\begin{split}
& -\int_0^T \int_{\Omega} k\partial_tw\,dxdt - \int_0^T\int_{\Omega}k{\bu}\cdot\nabla w\,dxdt
  + \int_0^T \int_{\Omega}\nu_D(k)\nabla k\cdot\nabla w\,dxdt + \int_0^T \int_{\Omega}\varepsilon(k)w\,dxdt \geq \\
&
\int_{\Omega} k_0w(0)\,dx +
\int_0^T \int_{\Omega}\nut(k)|\mathbf{D}({\bu})|^2w\,dxdt + \int_0^T \int_{\Omega}\nu_P(k)|{\bu}|^\beta w\,dxdt;
\end{split}
\end{equation}
\item The initial conditions are satisfied in the following sense
\begin{equation}\label{attain:ic:u0k0}
\lim_{t\to 0^+}\left(\left\|\bu(t)-\bu_0\right\|_2^2 + \left\|k(t)-k_0\right\|_1\right)=0.
\end{equation}
\end{enumerate}
Moreover,
\begin{enumerate}[leftmargin=*,topsep=-5pt,label=(\arabic*)]
\setcounter{enumi}{7}
  \item ${\bu}_t\in L^{\varsigma}(0,T;W^{-1,\varsigma}(\Omega)^d)$ for $1<\varsigma<\varsigma_0$, with $\varsigma_0$ defined below in \eqref{varsigma:00} (see also \eqref{varsigma:0});
  \item $k_t\in \mathcal{M}(0,T;W^{-1,\varrho}(\Omega))$
for $1<\varrho<\varrho_0$, with $\varrho_0$ defined below in \eqref{varrho:0} (see also \eqref{varrho:00}).
\end{enumerate}
\end{theorem}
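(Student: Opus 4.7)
I would adopt a three-level approximation scheme suited to coupled parabolic systems with $L^1$ right-hand sides. First, truncate the unbounded nonlinear coefficients by setting $\nut^n(k):=\nut(T_n(k))$, and analogously for $\nu_D$, $\nu_P$ and $\varepsilon$, where $T_n$ is a smooth cutoff at level $n$; second, regularise the initial data into smoother $(\bu_{0,n},k_{0,n})$ still satisfying $k_{0,n}\geq C_0$; third, build Faedo--Galerkin approximations $(\bu_{n,l},k_{n,l})$ using spectral bases of $\mathbf{V}$ and $V$. Since after truncation the coefficients are bounded and continuous, the Galerkin ODE system admits Carathéodory solutions, which are global on $[0,T]$ by standard energy estimates. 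Passing $l\to\infty$ by Aubin--Lions compactness and a Minty--Browder monotonicity argument for the $k$-equation (whose RHS at this stage is bounded thanks to the truncation) delivers Proposition~\ref{prop:exist:n}, i.e.\ a solution $(\bu_n,k_n)$ of the fully truncated problem.

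\textbf{Uniform estimates in $n$.} The core work (Sections~\ref{Sect-Est-ind(n)}--\ref{Sect-Att-ic}) is to derive a priori bounds for $(\bu_n,k_n)$ independent of the truncation level. Testing the momentum equation with $\bu_n$ and using $\nut\geq c_T$ yields $\bu_n\in L^\infty(0,T;\mathbf{H})\cap L^2(0,T;\mathbf{V})$ together with $\bu_n\in L^\alpha(Q_T)^d$; parabolic Gagliardo--Nirenberg interpolation then upgrades this to $L^{r_u}(Q_T)^d$ with $r_u$ as in \eqref{ru:rk}. Integrating the $k_n$-equation over $\Omega$, using \eqref{f:dissip-turb} and the non-negativity of all RHS contributions, gives $k_n\in L^\infty(0,T;L^1(\Omega))\cap L^{1+\vartheta}(Q_T)$. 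The lower bound $k_n\geq C_0$ is obtained by testing with $-(C_0-k_n)_+$ and exploiting the non-negativity of the RHS of \eqref{problem3} and of $\varepsilon$. The $W^{1,q}$-gradient estimate in \eqref{w:reg:k:W1q} requires a Boccardo--Gallouët truncation: test with $1-(1+k_n)^{-\delta}$ or with $T_s(k_n-j)$-type functions, which yields a gradient bound despite the $L^1$-nature of the RHS; the stated range of $q$ matches the classical Boccardo--Gallouët exponent, shifted by $\zeta$ due to $\nu_D(k)\sim(1+k)^\zeta$. Parabolic interpolation between $L^\infty(L^1)$ and $L^q(W^{1,q})$ produces the $L^r(Q_T)$-regularity for $r<r_k$. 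Hypotheses \eqref{eq:Cond1}--\eqref{Hyp:theta:gamma} enter here precisely to ensure that $\nut(k_n)\mathbf{D}(\bu_n)$, $\varepsilon(k_n)$ and $\nu_P(k_n)|\bu_n|^\beta$ each sit in some $L^{q}(Q_T)$ with $q>1$, via Hölder combined with the bounds above. Reading off the two PDEs then produces the time-derivative bounds in items (8)--(9).

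\textbf{Passage to the limit and initial data.} With these estimates, Aubin--Lions compactness yields strong convergence $\bu_n\to\bu$ in $L^s(Q_T)^d$ for any $s<r_u$, while Simon-type compactness (treating $\partial_t k_n$ as a bounded measure in $W^{-1,\varrho}$) gives $k_n\to k$ in $L^1(Q_T)$ and a.e.\ in $Q_T$. The convective and drag terms converge routinely; the fluxes $\nut(k_n)\mathbf{D}(\bu_n)$ and $\nu_D(k_n)\nabla k_n$ pass to their natural limits by Vitali's theorem, the equi-integrability being precisely ensured by \eqref{eq:Cond1}--\eqref{eq:Cond2}. The \emph{main obstacle} — and the reason \eqref{weak-form-k} is written as an inequality — is the production term $\nut(k_n)|\mathbf{D}(\bu_n)|^2$, which is only $L^1$-bounded: I would first establish strong convergence of $\mathbf{D}(\bu_n)$ in $L^2(Q_T)^{d\times d}$ by a Minty-type monotonicity argument on the momentum equation, combined with the a.e.\ convergence $\nut(k_n)\to\nut(k)$; Fatou's lemma then provides the $\geq$ direction in \eqref{weak-form-k}. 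The attainment \eqref{attain:ic:u0k0} follows from the weak-$*$ continuity $\bu\in C_w([0,T];\mathbf{H})$ together with the energy inequality forcing $\|\bu(t)\|_2\to\|\bu_0\|_2$ as $t\to 0^+$, and from the $L^1$-continuity of $k$ supplied by the measure-valued time-derivative bound together with $k_n(0)\to k_0$.
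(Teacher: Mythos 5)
Your overall architecture (truncation of the turbulent coefficients, mollified initial data, two-parameter Galerkin scheme, successive limits) matches the paper's, but you omit one device the paper cannot do without and replace it with an argument that does not close. The paper also regularises the \emph{convective term} by the cut-off $\Phi_n(|\bu|^2)$ in \eqref{problem1:n} (see \eqref{Phi}--\eqref{Phi:d}). Its role is to make $\bu_n$ an admissible test function in its own momentum equation, so that the truncated problem satisfies the energy \emph{equality} \eqref{weak-form-u:n:sqrt}; comparing this equality with the $\limsup$ of the Galerkin energies and with weak lower semicontinuity yields convergence of the norms and hence the strong $L^1$ convergence \eqref{strong:conv:nu} of $\nut^{(n)}(k^j)|\mathbf{D}(\bu^j)|^2$. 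This is precisely what allows the truncated TKE equation to be passed to the limit \emph{with equality}, i.e.\ Proposition~\ref{prop:exist:n}, which is the platform on which all the $n$-independent estimates are built. Your substitute --- ``strong convergence of $\mathbf{D}(\bu_n)$ in $L^2$ by a Minty-type monotonicity argument'' --- is not viable as stated: for fixed $k$ the operator $\bu\mapsto-\mathbf{div}\big(\nut(k)\mathbf{D}(\bu)\big)$ is linear, and Minty's trick only identifies the weak limit of the flux; it does not upgrade weak to strong convergence of gradients, especially when the coefficient $\nut(k_n)$ is unbounded and converges only almost everywhere. Note, moreover, that at the final stage $n\to\infty$ the paper neither claims nor needs strong convergence of $\mathbf{D}(\bu_n)$: the one-sided inequality \eqref{weak-form-k} follows from the weak $L^2$ convergence of $\sqrt{\nut^{(n)}(k_n)}\mathbf{D}(\bu_n)$ together with weak lower semicontinuity of the norm, \eqref{wlsc:L2norm:n}. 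So you are simultaneously asking for more than is needed and not providing a mechanism that delivers it.

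The second genuine gap is the attainment of the initial datum for $k$. A bound on $\partial_tk_n$ in $\mathcal{M}(0,T;W^{-1,\varrho}(\Omega))$ gives at best one-sided continuity of $t\mapsto k(t)$ with values in a negative Sobolev space; it does not yield $\|k(t)-k_0\|_1\to0$ as required by \eqref{attain:ic:u0k0}. The paper's argument is genuinely two-sided: an upper bound $\limsup_{t\to0^+}\|\mathcal{H}_1(k(t))\|_1\leq\|\mathcal{H}_1(k_0)\|_1$ (see \eqref{limit:sup:t0+}), obtained by integrating the TKE equation tested with $\mathcal{T}_1(k_n)$ and applying Fatou's lemma, and a lower bound \eqref{leq:lim:inf} obtained by testing with the concave renormalisation $\mathcal{T}_1(k_n)\mathcal{H}_1(k_n)^{-1/2}\phi$; the two combine to give $\mathcal{H}_1(k(t))^{1/2}\to\mathcal{H}_1(k_0)^{1/2}$ in $L^2(\Omega)$ and hence the $L^1$ convergence of $k(t)$. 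You would need to supply this (or an equivalent) argument. Two smaller remarks: your Boccardo--Gallou\"et-type test function $1-(1+k)^{-\delta}$ and the ensuing exponent bookkeeping are indeed what the paper uses; but testing with $-(C_0-k_n)_+$ does not give $k\geq C_0$ as you claim, because the dissipation contributes the term $\int\varepsilon(k_n)(C_0-k_n)_+\,dx\geq0$ on the wrong side of the inequality, so a Gr\"onwall argument starting from $(C_0-k_{n,0})_+=0$ does not close; a comparison argument (as in the reference the paper cites) is needed there.
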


In Theorem~\ref{thm:exist}, $\mathcal{M}(0,T;W^{-1,\varrho}(\Omega))$ denotes the space of Radon measures $\sigma:[0,T]\longrightarrow W^{-1,\varrho}(\Omega)$, where $W^{-1,\varrho}(\Omega)$ denotes the dual space of $W^{1,\varrho'}_0(\Omega)$, and $\varrho'$ is the Hölder conjugate of $\varrho$.

Observe that in \eqref{weak-form-u} the notion of solution is in the usual weak sense, but in \eqref{weak-form-k} the solution is considered in a suitable weak sense once the equality, for the best of author's knowledge, is not known how to be reached.
In a way, this resembles the notion of suitable weak solutions introduced in~\cite{CKN:1982}.
The notion of weak solution satisfying only \eqref{weak-form-u}-\eqref{weak-form-k}, without requiring an extra (opposite in)equality, may be considered very weak, because there can easily be many solutions in that conditions.
The alternative would be to proceed as in \cite{BLM:2011,BFM:2009}, considering Navier's slip boundary conditions, so that we can recover the pressure.
But then we would no longer be studying the same problem.
This issue has also been extensively studied in previous works of turbulence in clear flows~\cite{BLM:2011,D:2008,GLLMT:2003,LL:2007,L:1997,MN:2015}, but it has not yet been possible to solve it, in particular in the case of Dirichlet boundary conditions that we consider here.
The case of Navier's slip boundary conditions will be investigated shortly by the author.

Let us now make some comments regarding the enumerated items of the previous theorem, especially in the dimensions of physical interest $d=3$ and $d=2$.

\begin{remark}
1.
The range of $q$ assumed in (\ref{w:reg:k:W1q}) is required to prove estimates (\ref{est:grad:kj:L2:0}) and (\ref{est:grad:kj:Lq:0}) below (see also (\ref{w:conv:kn:W1q})).
In the case of $d=3$ or $d=2$, which correspond to the relevant situations from the point of view of physics, we get from (\ref{w:reg:k:W1q}), in the case of $0\leq\zeta\leq 1$, $q<\frac{3\zeta + 5}{4}$ if $d=3$, and $q<\frac{2\zeta+4}{3}$ if $d=2$.
The values of $q$ obtained here agree with~\cite{BLM:2011}, where, in addition to working only with turbulence in clear fluid flows, the authors of \cite{BLM:2011} solely considered the space dimension $d=3$.
Therefore the regularity
$k\in L^q(0,T;W^{1,q}_0(\Omega))$ is obtained whether we consider turbulence in fluid flows through permeable media or in clear fluid flows, being our work more general.
It should also be stressed that for $q$ given by either the cases in (\ref{w:reg:k:W1q}), we always have $q>d'$, which improves the range of $q$ considered in the works \cite{L:1997,O:2024a,OP:2017,OP:2018a}.

2. Observe that from \eqref{varsigma:0}, we can write
\begin{equation}\label{varsigma:00}
\varsigma_0:=
\left\{
\begin{array}{ll}
\min\left\{
\frac{2\left(d\zeta+d+2\right)}{d\zeta+d\eta+d+2},
1+\frac{2}{d},
\frac{1+\frac{2}{d}}{\alpha-1}\right\}, & \mbox{if}\ \ r_u=\frac{2(d+2)}{d}, \\
\min\left\{
\frac{2\left(d\zeta+d+2\right)}{d\zeta+d\eta+d+2},
\frac{\alpha}{2},
\frac{\alpha}{\alpha-1}\right\}, & \mbox{if}\ \ r_u=\alpha,
\end{array}
\right.
\end{equation}
Note that if $r_u=\alpha$, then $\alpha>2$. This, together with assumption \eqref{eq:Cond1}, assures us that $\varsigma_0>1$ in any case.
In the particular case of $r_u=\frac{2(d+2)}{d}$ and $\alpha<2$ in (\ref{varsigma:00}), then it would come
\begin{equation*}
\varsigma_0=\min\left\{\frac{2(d\zeta +d+2)}{d\zeta+d\eta+d+2},1+\frac{2}{d}\right\}
=
\left\{
\begin{array}{ll}
\min\left\{\frac{2(3\zeta+5)}{3\zeta+3\eta+5},\frac{5}{3}\right\}, & \mbox{if}\ \ d=3, \\
\min\left\{\frac{2\zeta+4}{\zeta+\eta+2},2\right\}, & \mbox{if}\ \ d=2,
\end{array}
\right.
\end{equation*}
which, in the case of $d=3$, was precisely the value obtained in \cite{BLM:2011} to show that both $\partial_t{\bu}$ and $p$ are in
$L^{\varsigma}(0,T;W^{-1,\varsigma}(\Omega)^d)$ for $1<\varsigma<\varsigma_0$.

3. From \eqref{varrho:0}, we can write
\begin{equation}\label{varrho:00}
\varrho_0=\left\{
\begin{array}{ll}
\min\left\{\frac{d\zeta+d+2}{d\zeta+d+1},\frac{2(d\zeta+d+2)(d+2)}{d(d\zeta+3d+6)},\frac{2(d\zeta+d+2)(d+2)}{d\beta(d\zeta+d+2)+ 2\gamma d(d+2)}\right\}, & \mbox{if}\ \ r_u=\frac{2(d+2)}{d} \\
\min\left\{\frac{d\zeta+d+2}{d\zeta+d+1},\frac{\alpha(d\zeta+d+2)}{d\zeta+d\alpha+d+2},\frac{(d\zeta +d+2)\alpha}{\beta(d\zeta +d+2)+ \gamma d \alpha}\right\}, & \mbox{if}\ \ r_u=\alpha.
\end{array}
\right.
\end{equation}
From assumption \eqref{Hyp:theta:gamma}, $\rho_0>1$ in any case.
If $r_u=\frac{2(d+2)}{d}$, then
\begin{equation*}
\varrho_0=
\left\{
\begin{array}{ll}
\min\left\{\frac{3\zeta+5}{3\zeta+4},\frac{10}{9}\frac{3\zeta+5}{\zeta+5},\frac{10(3\zeta+5)}{3\beta(3\zeta+5)+30\gamma}\right\}\leq
\min\left\{\frac{3\zeta+5}{3\zeta+4},\frac{10}{9}\frac{3\zeta+5}{\zeta+5}\right\}, & \mbox{if}\ d=3, \\
\min\left\{\frac{2\zeta+4}{2\zeta+3},\frac{4(\zeta+2)}{\zeta+6},\frac{4(\zeta+2)}{\beta(\zeta+2)+4\gamma}\right\}\leq
\min\left\{\frac{2\zeta+4}{2\zeta+3},\frac{4(\zeta+2)}{\zeta+6}\right\}, & \mbox{if}\ d=2,
\end{array}
\right.
\end{equation*}
which, again in the case of $d=3$, was the precise value obtained in \cite{BLM:2011} to justify the boundedness of
$k_t$ in $\mathcal{M}(0,T;W^{-1,\varrho}(\Omega))$ for  $1<\varrho<\varrho_0$.
\end{remark}

For the sake of organization, the proof of Theorem~\ref{thm:exist} shall be split into the sections that follow.
We shall first consider an auxiliary problem that not only truncates all the nonlinear turbulence terms but also regularizes the convective term.

\section{Truncated problem}\label{Sect:trunc}

As the term $\nut(k)|\mathbf{D}({\bu})|^2$ is only in $L^1$ and since, in this work, we are considering the coefficient functions $\nut(k)$, $\nu_D(k)$ and $\nu_P(k)$ with increasingly larger values, we start by considering an approximate problem that takes into account the truncation of these terms.
Let
 $\mathcal{T}_n:\mathds{R}\longrightarrow\mathds{R}$ denote the truncation function at height $n$, given by
\begin{equation}\label{trunc:T}
\mathcal{T}_n (k) =
\left \{ \begin{array}{ll} k & \hbox{if } |k| \leq n, \\
\frac{n}{| k | } k & \hbox{if } |k| > n,
\end{array} \right.
\end{equation}
and let
\begin{equation} \nut^{(n)} =  \nut\circ \mathcal{T}_n, \quad \nu_D^{(n)} =  \nu_D \circ \mathcal{T}_n, \quad  \nu_P^{(n)} =  \nu_P \circ \mathcal{T}_n.
\label{comp:n}
\end{equation}
Note that, in view of (\ref{f:visc-turb}), (\ref{f:diff-turb}) and (\ref{f:P-turb}), one has
\begin{alignat}{2}
 c_T\leq\nut^{(n)}(k)\leq &\ C_T(1+n)^\eta,  && \label{nuT:n:bd} \\
 c_D\leq\nu_D^{(n)}(k)\leq &\ C_D(1+n)^\zeta,  \label{nuD:n:bd} && \\
 c_P\leq\nu_P^{(n)}(k)\leq &\ C_P(1+n)^\gamma  \label{nuP:n:bd} &&
\end{alignat}
for all $k \in \mathds{R}^+_0$ and a.e. in $Q_T$.

Let us now extend $k_0$ to the whole $\mathds{R}^d$ in such a way that, for this extension, say $\overline{k}_0$, $\overline{k}_0=C_0$ in $\mathds{R}^d\setminus\Omega$, and where $C_0$ is the positive constant from assumption \eqref{k0>C}.
Next, we regularize $\overline{k}_0$ by considering its mollifying function
\begin{equation}\label{moll:k0}
k_{n,0}:=\eta_\delta\star \overline{k}_0,,\qquad \delta=n^{-1},\quad n\in\mathds{N},
\end{equation}
where  $\eta_\delta$ is the Friedrichs mollifying kernel.
In view of assumption \eqref{k0>C}, one has
$k_{n,0} \geq C_0>0$ 
a.e. in $\Omega$.
In addition, due to \eqref{eq:cond_ini_2} and \eqref{moll:k0},
\begin{equation}\label{sc:rn00}
k_{n,0}\xrightarrow[n\to\infty]{}k_{0} \ \ \mbox{in}\ \ L^1(\Omega).
\end{equation}

We consider a sequence $\bu_{n,0}\in\bH$ such that
\begin{equation}\label{conv:un:u0}
\bu_{n,0}\xrightarrow[n\to\infty]{}\bu_{0} \ \ \mbox{in}\ \ L^2(\Omega)^d.
\end{equation}

To be able to use the energy equality of the mean flow equation in the final stage of the proof of the Theorem~\ref{thm:exist} (see \eqref{weak-form-u:n:sqrt} later), we regularize the velocity field in the convective term.
For that, let $\Phi\in C^\infty\big([0,\infty)\big)$ be a non-increasing function such that
\begin{equation}\label{Phi}
\Phi(\tau)=\left\{
\begin{array}{ll}
1 & \mbox{if}\ \ 0\leq\tau\leq 1, \\
0 & \mbox{if}\ \ \tau\geq 2,
\end{array}\right.
\qquad 0\leq \Phi\leq 1\ \ \mbox{in}\ \ [0,\infty)
\end{equation}
For $n\in\mathds{N}$, we set
\begin{equation}\label{Phi:d}
\Phi_n(\tau)=\Phi\left(\frac{\tau}{n}\right),\qquad \tau\in[0,\infty).
\end{equation}

For each $n\in\mathds{N}$, we consider the truncated and regularized problem
\begin{alignat}{2}
& \partial_t{\bu} + \operatorname{div}\big(\Phi_n(|\bu|^2)\bu(t)\otimes\bu(t)\big) - \mathbf{div}\left(\nut^{(n)}(k)\mathbf{D}({\bu})\right) + \nabla p=\mathbf{g}- \Big(c_{D}+c_F|{\bu}|^{\alpha-2}\Big) {\bu} \quad \mbox{in}\ \ Q_T , \label{problem1:n}\\
& \operatorname{div}{\bu}=0 \qquad \mbox{in}\ \ Q_T,\label{problem2:n}\\
& \partial_tk+{\bu}\cdot\nabla k -\operatorname{div}(\nu_D^{(n)}(k)\nabla k)=\nut^{(n)}(k)|\mathbf{D}({\bu})|^2 +\nu_P^{(n)} (k)|{\bu}|^\beta-\varepsilon(k)\quad \mbox{in}\ \ Q_T,\label{problem3:n}\\
& {\bu} ={\bu}_{n,0}\quad\mbox{and}\quad k =k_{n,0} \quad \mbox{in}\ \ \Omega\times\{0\}, \label{problem4:n}\\
& {\bu} =\mathbf{0}\quad\mbox{and}\quad k=0 \quad \mbox{on}\ \ \Gamma_T.  \label{problem5:n}
\end{alignat}

The next result asserts the existence of truncated-regularized solutions  to the problem (\ref{problem1})-(\ref{problem5}).

\begin{proposition}\label{prop:exist:n}
Let the conditions of Theorem~\ref{thm:exist} be fulfilled.
Then (for each $n\in\mathds{N}$) there exists, at least, a couple of solutions $({\bu}_n,k_n)$ to the problem (\ref{problem1:n})-(\ref{problem5:n})
such that (1)-(3) and (7) of Theorem~\ref{thm:exist} are fulfilled and for every  $\bv\in \mathbf{V}\cap L^\alpha(\Omega)^d$ and every $w\in W^{1,\infty}_0(\Omega)$,
\begin{equation}
\begin{split}\label{weak-form-u:n}
& \frac{d}{dt}\int_{\Omega} \bu_n(t)\cdot\bv\,dx - \int_{\Omega}\Phi_n(|\bu_n|^2)\bu_n(t)\otimes\bu_n(t):\nabla\bv\,dx
  +\int_{\Omega}\nut^{(n)}(k_n(t))\,\mathbf{D}(\bu_n(t)):\nabla\bv\,dx \\
&  +\int_{\Omega}\left(c_{Da}+c_{Fo}|\bu_n(t)|^{\alpha-2}\right)\bu_n(t)\cdot\bv\,dx
=
\int_{\Omega}\bg(t)\cdot\bv\,dx
\end{split}
\end{equation}
and
\begin{equation}\label{weak-form-k:n}
\begin{split}
& \frac{d}{dt}\int_{\Omega} k_n(t)w\,dx - \int_{\Omega}k_n(t)\bu_n(t)\cdot\nabla w\,dx
  +\int_{\Omega}\nu_D^{(n)}(k_n(t))\nabla k_n(t)\cdot\nabla w\,dx  + \int_{\Omega}\varepsilon(k_n(t))w\,dx  \\
& = \int_{\Omega}\nut^{(n)}(k_n(t))|\mathbf{D}(\bu_n(t))|^2w\,dx + \int_{\Omega}\nu_P^{(n)}(k_n(t))|\bu_n(t)|^\beta w\,dx
\end{split}
\end{equation}
hold for all $t\in(0,T)$.
\end{proposition}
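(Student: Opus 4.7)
The plan is to construct solutions to \eqref{problem1:n}-\eqref{problem5:n} via a Galerkin scheme applied simultaneously to both unknowns. Let $\{\bv_j\}_{j\geq 1}$ be a smooth basis of $\mathbf{V}\cap L^\alpha(\Omega)^d$ orthonormal in $\mathbf{H}$ (for instance eigenfunctions of the Stokes operator in $\mathbf{V}^s$ with $s$ large enough that $\mathbf{V}^s\hookrightarrow L^\alpha(\Omega)^d$), and let $\{w_j\}_{j\geq 1}\subset V\cap W^{1,\infty}_0(\Omega)$ be a smooth basis orthonormal in $L^2(\Omega)$. I look for approximations $\bu_m(t)=\sum_{j=1}^m a_j^m(t)\bv_j$ and $k_m(t)=\sum_{j=1}^m b_j^m(t)w_j$ satisfying the Galerkin projection of \eqref{weak-form-u:n}-\eqref{weak-form-k:n} against $\bv_j$ and $w_j$ for $j\leq m$, with initial data given by the orthogonal projections of $\bu_{n,0}$ and $k_{n,0}$. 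Because \eqref{nuT:n:bd}-\eqref{nuP:n:bd} bound $\nut^{(n)},\nu_D^{(n)},\nu_P^{(n)}$ uniformly at fixed $n$, and the regularized convection $\Phi_n(|\bu|^2)\bu\otimes\bu$ has at most polynomial growth, the resulting coefficient system for $(a^m,b^m)$ is of Carath\'eodory type, admits local solutions, and extends to $[0,T]$ by the a priori bounds below.

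\textbf{A priori estimates.} Testing the momentum equation with $\bu_m$ annihilates the regularized convective term by $\operatorname{div}\bu_m=0$ together with the chain-rule structure of $\Phi_n(|\bu_m|^2)(\bu_m\cdot\nabla)\bu_m$. Combined with $\nut^{(n)}\geq c_T$ and the Forchheimer dissipation, this yields $\bu_m$ uniformly bounded in $L^\infty(0,T;\mathbf{H})\cap L^2(0,T;\mathbf{V})\cap L^\alpha(0,T;L^\alpha(\Omega)^d)$. Testing the $k$-equation against the Galerkin projection of the constant $1$ gives
\begin{equation*}
\sup_{t\in[0,T]}\|k_m(t)\|_{L^1(\Omega)}+\int_0^T\!\!\int_\Omega\varepsilon(k_m)\dxt \leq C,
\end{equation*}
the right-hand side sources being controlled by the previous bound on $\bu_m$ together with $\nut^{(n)},\nu_P^{(n)}\leq C(n)$. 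The $L^q(0,T;W^{1,q}_0(\Omega))$ regularity prescribed in \eqref{w:reg:k:W1q} is then obtained by a Boccardo--Gallou\"et-type argument using test functions of the form $1-(1+k_m)^{-\lambda}$ with $\lambda>0$ chosen appropriately, combined with a Gagliardo--Nirenberg interpolation.

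\textbf{Compactness, limit passage, and the main difficulty.} Taking test functions of fixed index $j$ in the Galerkin system provides estimates of $\partial_t\bu_m$ in $L^\varsigma(0,T;W^{-1,\varsigma}(\Omega)^d)$ and of $\partial_t k_m$ in $\mathcal{M}(0,T;W^{-1,\varrho}(\Omega))$ for some $\varsigma,\varrho>1$ depending on $n$. The Aubin--Lions--Simon lemma then yields, along a subsequence, strong convergence $\bu_m\to\bu_n$ in $L^2(0,T;\mathbf{H})$ and $k_m\to k_n$ in $L^r(Q_T)$ for some $r>1$, with a.e.\ convergence in $Q_T$. Continuity and uniform boundedness (at fixed $n$) of $\nut^{(n)},\nu_D^{(n)},\nu_P^{(n)},\varepsilon$, via Vitali's theorem, allow passage to the limit in every term at most linear in $\mathbf{D}(\bu_m)$ or $\nabla k_m$. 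The sole obstacle, and the main technical point of the proof, is the production term $\nut^{(n)}(k_m)|\mathbf{D}(\bu_m)|^2$ on the right-hand side of the $k$-equation, which requires strong convergence $\mathbf{D}(\bu_m)\to\mathbf{D}(\bu_n)$ in $L^2(Q_T)$. Here the convective regularization $\Phi_n$ is decisive: it makes $\bu_n$ admissible as a test function in the limit momentum equation, so the energy identity holds both for $\bu_m$ at the Galerkin level and for $\bu_n$ in the limit. Combining these identities with weak lower semicontinuity of $(\bv,\kappa)\mapsto\int_{Q_T}\nut^{(n)}(\kappa)|\mathbf{D}(\bv)|^2\dxt$ (via Fatou and the a.e.\ convergence $\nut^{(n)}(k_m)\to\nut^{(n)}(k_n)$) forces
\begin{equation*}
\int_{Q_T}\nut^{(n)}(k_m)|\mathbf{D}(\bu_m)|^2\dxt \;\longrightarrow\; \int_{Q_T}\nut^{(n)}(k_n)|\mathbf{D}(\bu_n)|^2\dxt,
\end{equation*}
and the uniform coercivity $\nut^{(n)}\geq c_T$ upgrades weak to strong $L^2$ convergence of $\mathbf{D}(\bu_m)$ by a standard convexity argument.

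\textbf{Remaining items.} Item (3) is obtained by testing the $k$-equation with a suitable cutoff of $(C_0-k_n)^+$, exploiting $\operatorname{div}\bu_n=0$ to eliminate the transport contribution, the non-negativity of $\varepsilon(k_n)$ and of the production terms on the set $\{k_n\leq C_0\}$, and comparison with the initial datum $k_{n,0}\geq C_0$. Items (1) and (2) follow by weak and weak-$\ast$ lower semicontinuity applied to the a priori bounds, and item (7) is delivered by the weak continuities $\bu_n\in C_w([0,T];\mathbf{H})$ and $k_n\in C_w([0,T];L^1(\Omega))$ granted by the $\partial_t$-estimates, combined with the energy identity for $\bu_n$ via the standard weak-strong upper semicontinuity argument for the $L^2$-norm at $t=0^+$.
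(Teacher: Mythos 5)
Your overall architecture (Galerkin scheme, energy estimates, Boccardo--Gallou\"et-type test functions for the TKE equation, and the energy-identity argument --- enabled by the regularization $\Phi_n$ --- to pass the quadratic production term $\nut^{(n)}(k)|\mathbf{D}(\bu)|^2$ to the limit) matches the paper's; the last two ingredients are exactly the paper's \eqref{weak-form-u:n:sqrt}--\eqref{strong:conv:nu} and Lemmas~\ref{lem:est:kj}--\ref{lem:est:grad:kj}. However, there is a genuine gap in how you discretize. You use a \emph{single} Galerkin index $m$ for both unknowns and then propose to test the $k$-equation with the projection of the constant $1$ and with $1-(1+k_m)^{-\lambda}$. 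Neither is admissible in the Galerkin system: the constant $1$ does not vanish on $\partial\Omega$ and its projection onto the $m$-dimensional space spanned by $w_1,\dots,w_m$ does not annihilate the diffusion and transport terms, while $1-(1+k_m)^{-\lambda}$ (like $\mathcal{T}_1(k_m)$) is a nonlinear function of $k_m$ that does not belong to that linear span. This matters because the only estimate you \emph{can} extract at the Galerkin level --- testing with $k_m$ itself --- is not uniform in $m$: the source $\nut^{(n)}(k_m)|\mathbf{D}(\bu_m)|^2$ is bounded uniformly only in $L^1(Q_T)$, so controlling $\int_\Omega\nut^{(n)}(k_m)|\mathbf{D}(\bu_m)|^2k_m\,dx$ forces an inverse inequality in the finite-dimensional velocity space and a constant that blows up with $m$ (this is exactly why the paper's estimates \eqref{est4_0:eq-k}--\eqref{est:dt:k:jl} carry the dependence $C(n,j)$). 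Without a uniform gradient bound on $k_m$ there is no Aubin--Lions compactness for $k_m$, hence no a.e.\ convergence, and the limit passage in the TKE equation collapses.

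The paper circumvents this with a two-parameter scheme: velocity in $\mathbf{X}_j$, TKE in $X_l$, with the limit $l\to\infty$ taken \emph{first} at fixed $j$ using the crude $(j,n)$-dependent estimates; at that stage the quadratic term converges trivially because $\nabla\bu^{j,l}\to\nabla\bu^j$ uniformly in the fixed finite-dimensional velocity space. Only after this step does the TKE equation \eqref{weak-form-k:tr:j:w} hold against arbitrary $w\in W^{1,2}_0(\Omega)$, which is what legitimizes the test functions $\mathcal{T}_1(k^j)$ and $\upsilon(k^j)=1-(1+k^j)^{-\delta}$ in Lemmas~\ref{lem:est:kj} and~\ref{lem:est:grad:kj} and yields the estimates uniform in $j$ (and $n$). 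To repair your argument you need either this two-level structure or some other intermediate approximation of the TKE equation (e.g.\ a fixed-point scheme with a regularized right-hand side) before the Boccardo--Gallou\"et machinery can be switched on; as written, the single-index scheme does not support it.
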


\begin{proof}(Proposition~\ref{prop:exist:n})
The proof of Proposition~\ref{prop:exist:n} will be carried out in the next sections.

\section{Galerkin approximations for the truncated problem}\label{Sect-GA-tr}

In this section, we start the proof of Proposition~\ref{prop:exist:n}.
For the sake of simplifying de notation, in the course of this proof, we drop the subscript $n$.

We proceed as in~\cite{O:2024a,O:2023a} and consider orthogonal bases $\left\{\bv_{i}\right\}_{i\in\mathds{N}}$ of $\mathbf{V}^s$, for $s>1+\frac{d}{2}$, and $\left\{w_{i}\right\}_{i\in\mathds{N}}$ of $H^1(\Omega)$, for $s>\frac{d}{2}$,  that are orthonormal in $L^2(\Omega)^d$ and in $L^2(\Omega)$, respectively.
Given $j,\ l\in\mathds{N}$, let us consider the
$j$-dimensional space $\mathbf{X}_{j}:=\operatorname{span}\{\bv_{1},\dots,\bv_{j}\}$ and the $l$-dimensional space $X_{l}:=\operatorname{span}\{w_{1},\dots,w_{l}\}$.
For each $j\in\mathds{N}$ and  each  $l\in\mathds{N}$, we search for approximate solutions
\begin{alignat}{2}
& {\bu}^{j,l}(x,t)= \sum_{i=1}^{j}a_i^{j,l}(t)\bv_{i}(x),\quad\bv_{i}\in\mathbf{X}_{j}, && \label{eq:pres1} \\
& k^{j,l}(x,t)= \sum_{i=1}^{l}c_i^{j,l}(t)w_{i}(x),\quad w_{i}\in X_{l},\qquad  k^{j,l}\geq 0\quad\mbox{a.e. in}\ Q_T, && \label{eq:pres2}
\end{alignat}
where the coefficients $a_{1}^{j,l}(t),\dots,a_{j}^{j,l}(t)$ and $c_{1}^{j,l}(t),\dots,c_{l}^{j,l}(t)$ solve the following system of $j+l$ ordinary differential equations
\begin{equation}\label{weak-form-u:tr:ap}
\begin{split}
&
\frac{d}{dt}\int_{\Omega} {\bu}^{j,l}(t)\cdot\bv_i\,dx - \int_{\Omega}\Phi_n(|\bu_n|^2){\bu}^{j,l}(t)\otimes{\bu}^{j,l}(t):\nabla\bv_i\,dx
  +\int_{\Omega}\nut^{(n)}(k^{j,l}(t))\,\mathbf{D}({\bu}^{j,l}(t)):\mathbf{D}(\bv_i)\,dx \\
&  +\int_{\Omega}\left(c_{Da}+c_{Fo}|{\bu}^{j,l}(t)|^{\alpha-2}\right){\bu}^{j,l}(t)\cdot\bv_i\,dx
 =
\int_\Omega \mathbf{g}(t)\cdot\bv_i\,dx, \qquad i=1,\dots,j,
\end{split}
\end{equation}
\begin{equation}\label{weak-form-k:tr:ap}
\begin{split}
& \frac{d}{dt}\int_{\Omega} k^{j,l}(t)w_i\,dx - \int_{\Omega}k^{j,l}(t){\bu}^{j,l}(t)\cdot\nabla w_i\,dx
  +\int_{\Omega}\nu_D^{(n)}(k^{j,l}(t))\nabla k^{j,l}(t)\cdot\nabla w_i\,dx \\
& + \int_{\Omega}\varepsilon(k^{j,l}(t))w_i\,dx = \\
& \int_{\Omega}\nut^{(n)}(k^{j,l}(t))|\mathbf{D}({\bu}^{j,l})|^2w_i\,dx + \int_{\Omega}\nu_P^{(n)}(k^{j,l}(t))|{\bu}^{j,l}(t)|^\beta w_i\,dx,
\qquad i=1,\dots,l.
\end{split}
\end{equation}
System (\ref{weak-form-u:tr:ap})-(\ref{weak-form-k:tr:ap}) is
supplemented with the following initial conditions
\begin{equation}\label{eq:0}
\bu^{j,l}(0)=\bu^{j,l}_{0}\quad
\mbox{and}\quad k^{j,l}(0)=k^{j,l}_{0}\quad\mbox{in}\ \Omega,
\end{equation}
where $\bu^{j,l}_{0}$ and $k^{j,l}_{0}$ are the orthogonal projections of $\bu_{n,0}$ and $k_{n,0}$ onto $\mathbf{X}_{j}$ and $X^l$, respectively.
Whence
\begin{equation*}
\bu^{j,l}_{0}= \sum_{i=1}^{j}a_{0,i}^{j,l}\bv_{i},\quad\bv_{i}\in\mathbf{X}_{j},\qquad 
k^{j,l}_0= \sum_{i=1}^{l}c_{0,i}^{j,l}w_{i},\quad w_{i}\in X_{l},  
\end{equation*}
for some $\ba_0=(a_{0,1}^{j,l},\dots,a_{0,j}^{j,l})\in\R^j$ and $\bc_0=(c_{0,1}^{j,l},\dots,c_{0,l}^{j,l})\in\R^l$.
We can assume that
\begin{alignat}{2}
& \label{sc:vn0}
\bu^{j,l}_{0}\xrightarrow[l\to\infty]{} \bu^{j}_{0} \ \ \mbox{in}\ \ L^2(\Omega)^d,\qquad
&& \bu^{j}_{0}\xrightarrow[j\to\infty]{} \bu_{n,0}\ \ \mbox{in}\ \ L^2(\Omega)^d,
\\
&
\label{sc:rn0}
k^{j,l}_{0}\xrightarrow[l\to\infty]{} k_0^j \ \ \mbox{in}\ \ L^2(\Omega),\qquad
&& k^{j}_{0}\xrightarrow[j\to\infty]{}k_{n,0} \ \ \mbox{in}\ \ L^1(\Omega).
\end{alignat}
The existence of solutions $\ba(t)=(a_{1}^{j,l}(t),\dots,a_{j}^{j,l}(t))$ and $\bc(t)=(c_{1}^{j,l}(t),\dots,c_{l}^{j,l}(t))$ solving the Cauchy problem (\ref{weak-form-u:tr:ap})-(\ref{eq:0}) in the entire time interval $[0,T]$ is justified by the application of Carathéodory's theorem and the Continuation Principle (see~\cite{O:2024a} for the details).

Using assumptions (\ref{f:P-turb}), (\ref{g:V'}), (\ref{Hyp:theta:gamma}) and (\ref{eq:cond_ini_1})-(\ref{eq:cond_ini_2}), together with the boundedness of the truncated turbulent-depending functions, set in (\ref{nuT:n:bd})-(\ref{nuP:n:bd}), we can proceed as in~\cite{O:2024a}, to prove that
\begin{alignat}{2}
& \label{est2:eq:u:T}
\sup_{t\in[0,T]}\|{\bu}^{j,l}(t)\|_{2}^2 + \int_0^T\left\|\sqrt{\nut^{(n)}(k^{j,l}(t))}\mathbf{D}(\bu^{j,l}(t))\right\|_{2}^2dt + c_{Fo}\int_0^T\|{\bu}^{j,l}(t)\|_{\alpha}^{\alpha}dt
\leq C_1, && \\
& \label{est3:eq-u:jl}
\int_0^T\|\nabla{\bu}^{j,l}(t)\|_{2}^2dt \leq C_2, && \\
& \label{est4:eq-u:jl}
\int_0^T\|{\bu}^{j,l}(t)\|_{r_u}^{r_u}dt\leq C_3,\qquad \mbox{$r_u$ given in (\ref{ru:rk})}, && \\
& \label{est:dudt:jl:2}
\int_0^T\|\pt\bu^{j,l}(t)\|_{2}^2dt \leq C_4(n,j), && \\
& \label{est:dadt:jl}
\int_0^T\left|\frac{d\ba^{j,l}(t)}{dt}\right|_{2}^2dt \leq C_5(n,j) &&
\end{alignat}
and
\begin{alignat}{2}
& \label{est4_0:eq-k}
\sup_{t\in[0,T]}\|k^{j,l}(t)\|_{2}^2  + c_\varepsilon\int_0^T\|k^{j,l}(t)\|_{\vartheta+2}^{\vartheta+2}dt +  \int_0^T\left\|\sqrt{\nu_D^{(n)}(k^{j,l}(t))}\nabla k^{j,l}(t)\right\|^2_2dt \leq C_6(n,j), && \\
& \label{est5:eq-k}
\int_0^T\|\nabla k^{j,l}(t)\|^2_2dt \leq C_7(n,j), && \\
& \label{est4:eq-k:t}
\int_0^T\|k^{j,l}(t)\|_{\rho_k}^{\rho_k}dt  \leq C_8(n,j),\qquad \mbox{$\rho_k$ given in (\ref{ru:rk})}, && \\
& \label{est:dt:k:jl}
\int_0^T\left\|\partial_tk^{j,l}(t)\right\|_{W^{-1,s}(\Omega)}^{\rho}dt\leq C_9(n,j),\qquad \rho:=\min\left\{\frac{r_u}{\beta},\frac{\rho_k}{\vartheta+1}\right\},\quad s>\frac{d}{2}, &&
\end{alignat}
for some positive constants $C_1,\dots,C_9$.

Then, due to the uniform (independent of $j$), estimates (\ref{est2:eq:u:T})-(\ref{est4:eq-u:jl}) and (\ref{est4_0:eq-k})-(\ref{est:dt:k:jl}), we can combine the Banach-Alaoglu theorem with the Aubin-Lions compactness lemma and the Riesz-Fischer theorem to extract subsequences (still labeled by the same superscript $l$) such that
\begin{alignat}{3}
& \label{w*:conv:ujl}
{\bu}^{j,l}\xrightharpoonup[l\to\infty]{\ast} {\bu}^j\ \ \mbox{in}\ \ L^\infty(0,T;\mathbf{H}), && \\
  & \label{w:conv:ujl}
{\bu}^{j,l}\xrightharpoonup[l\to\infty]{} {\bu}^j,\ \ \mbox{in}\ \ L^2(0,T;\mathbf{V})\cap L^{r_u}(0,T;L^{r_u}(\Omega)^d), && \\
  & \label{w:conv:ajl'}
\ba^{j,l}\xrightharpoonup[l\to\infty]{} \ba^j\ \ \mbox{in}\ \ W^{1,2}(0,T), &&
\end{alignat}

\begin{alignat}{2}
& \label{w*:conv:kjl}
k^{j,l}\xrightharpoonup[l\to\infty]{\ast} k^j\ \ \mbox{in}\ \ L^\infty(0,T;L^2(\Omega)), && \\
  & \label{w:conv:kjl}
k^{j,l}\xrightharpoonup[l\to\infty]{} k^j\ \ \mbox{in}\ \ L^2(0,T;H^1_0(\Omega))\cap L^{\rho_k}(0,T;L^{\rho_k}(\Omega)), && \\
& \label{w:conv:kjl'}
\partial_tk^{j,l}\xrightharpoonup[l\to\infty]{} \partial_tk^j\ \ \mbox{in}\ \ L^2(0,T;W^{-s,2}(\Omega)),\qquad
s> \frac{d}{2},
\end{alignat}
\begin{alignat}{2}
& \label{str:conv:ujl}
{\bu}^{j,l}\xrightarrow[l\to\infty]{}{\bu}^j\ \ \mbox{in}\ \ L^q(0,T;L^q(\Omega)^d)\quad \forall\ q:\ 1\leq q<r_u,  && \\
  & \label{str:conv:kjl}
k^{j,l}\xrightarrow[l\to\infty]{}k^j\ \ \mbox{in}\ \ L^q(0,T;L^q(\Omega))\quad \forall\ q:\ 1\leq q<\rho_k, && \\
& \label{str:conv:ajl'}
\ba^{j,l}\xrightarrow[l\to\infty]{}\ba^j\ \ \mbox{in}\ \ C[0,T] &&
\end{alignat}
and
\begin{alignat}{2}
& \label{ae:conv:ujl}
{\bu}^{j,l}\xrightarrow[l\to\infty]{}{\bu}^j\quad \mbox{a.e.\ \ in}\ \ Q_T,  && \\
& \label{ae:conv:kjl}
k^{j,l}\xrightarrow[l\to\infty]{}k^j\quad \mbox{a.e.\ \ in}\ \ Q_T, && \\
& \label{unif:conv:gradu:jl}
\nabla\bu^{j,l}\xrightarrow[l\to\infty]{\ \text{uniformly}\ } \nabla\bu^j\ \ \mbox{in}\ \ Q_T, &&
\end{alignat}
where $r_u$ and $\rho_k$ are given in (\ref{ru:rk}).

All the terms in the approximate mean flow equation (\ref{weak-form-u:tr:ap}), with the exception of the ones involving the Darcy and Forchheimer terms and the turbulent viscosity, can be proven to converge, as in the classical Navier-Stokes equations.
With respect to the drag forces, just the Forchheimer term needs some justification, which can be done exactly the same way as in~\cite{O:2024a}.

For the turbulent viscosity term, we first observe that, by (\ref{e-visc-Carath}), (\ref{ae:conv:kjl}) and \eqref{unif:conv:gradu:jl}, we have
\begin{equation}\label{ae:conv:nuT(k)Du:jl:1}
\nut(k^{j,l})\Dujl\xrightarrow[l\to\infty]{} \nut(k^{j})\Duj\quad \mbox{a.e.\ \ in}\ \ Q_T.
\end{equation}
Moreover, from \eqref{nuT:n:bd} and \eqref{est2:eq:u:T}, one has
\begin{equation*}
\int_0^T\|\nut(k^{j,l})\Dujlt\|_{2}^2dt \leq C(n),
\end{equation*}
for some positive constant $C$.
As a consequence,
\begin{equation}\label{w:conv:nuT(k)Dv:jl}
\nut(k^{j,l})\Dujl\xrightharpoonup[l\to\infty]{}  \nut(k^j)\mathbf{D}(\bu^j)\ \ \mbox{in}\ \ L^2(0,T;L^{2}(\Omega)^{d\times d}).
\end{equation}

With respect to the regularized convective term, we can use \eqref{Phi:d} and \eqref{ae:conv:ujl}, and proceed as in the proof of~\cite[Proposition~1]{O:2024a}, to show that
\begin{equation}\label{w:conv:Phi-d:jl}
\Phi_n(|\bu^{j,l}|^2)\bu^{j,l}\otimes\bu^{j,l}\xrightharpoonup[l\to\infty]{}
\Phi_n(|\bu^{j}|^2)\bu^{j}\otimes\bu^{j}\ \ \mbox{in}\ \ L^{\frac{r_u}{2}}(0,T;L^{\frac{r_u}{2}}(\Omega)^{d\times d}).
\end{equation}

Using the convergence results \eqref{w:conv:ujl}-\eqref{w:conv:ajl'} and \eqref{w:conv:nuT(k)Dv:jl}-\eqref{w:conv:Phi-d:jl}, we can pass to the limit $l\to\infty$ in the approximate weak formulation (\ref{weak-form-u:tr:ap}) to obtain
\begin{equation}
\begin{split}\label{weak-form-u:n:j}
& \frac{d}{dt}\int_{\Omega} {\bu}^j(t)\cdot\bv_i\,dx - \int_{\Omega}\Phi_n(|\bu^{j}(t)|^2){\bu}^j(t)\otimes{\bu}^j(t):\nabla\bv_i\,dx
  +\int_{\Omega}\nut^{(n)}(k^j(t))\,\mathbf{D}({\bu}^j(t)):\mathbf{D}(\bv_i)\,dx \\
&  +\int_{\Omega}\left(c_{Da}+c_{Fo}|{\bu}^j(t)|^{\alpha-2}\right){\bu}^j(t)\cdot\bv_i\,dx
=
\int_\Omega \mathbf{g}(t)\cdot\bv_i\,dx\qquad \forall\ i\in\{1,\dots,j\}.
\end{split}
\end{equation}

Regarding the approximate TKE equation (\ref{weak-form-k:tr:ap}), we just comment on the turbulent terms of diffusion, dissipation, viscosity and production.
Arguing as we did for (\ref{w:conv:nuT(k)Dv:jl}), but now using (\ref{nuD:n:bd}), (\ref{est4_0:eq-k}) and (\ref{ae:conv:kjl}), we can prove that
\begin{equation}\label{w:conv:nuD(k)Dk:jl}
\nu_D(k^{j,l})\nabla k^{j,l}\xrightharpoonup[l\to\infty]{}  \nu_D(k^j)\nabla k^j\ \ \mbox{in}\ \ L^2(0,T;L^{2}(\Omega)^{d}).
\end{equation}

Due to (\ref{e-visc-Carath}) and (\ref{ae:conv:kjl}), there holds
\begin{equation}\label{conv:ae:e(kjl)}
\varepsilon(k^{j,l})\xrightarrow[l\to\infty]{}  \varepsilon(k^{j})\ \ \mbox{a.e. in}\ \ Q_T,
\end{equation}
and by using assumption (\ref{f:dissip-turb}), together with (\ref{est4_0:eq-k}), we can show that
\begin{equation}\label{est4:eq-k:t+2}
\int_0^T\|\varepsilon(k^{j,l}(t))\|_{\frac{\vartheta+2}{\vartheta+1}}^{\frac{\vartheta+2}{\vartheta+1}}dt  \leq C(j,n)
\end{equation}
for some positive constant $C$.
Hence, (\ref{conv:ae:e(kjl)}) and (\ref{est4:eq-k:t+2}) assure that
\begin{equation}\label{w:conv:e(kjl)}
\varepsilon(k^{j,l})\xrightharpoonup[l\to\infty]{}  \varepsilon(k^{j})\ \ \mbox{in}\ \ L^{\frac{\vartheta+2}{\vartheta+1}}(0,T;L^{\frac{\vartheta+2}{\vartheta+1}}(\Omega)).
\end{equation}
Arguing as we did for (\ref{ae:conv:nuT(k)Du:jl:1}), we also have
\begin{equation*}
\nut^{(n)}(k^{j,l})|\Dujl|^2\xrightarrow[l\to\infty]{} \nut^{(n)}(k^{j})\big|\Duj\big|^2\quad \mbox{a.e.\ \ in}\ \ Q_T.
\end{equation*}
From \eqref{est2:eq:u:T}, we can show that
\begin{equation*}
\limsup_{l\to\infty}\int_0^T\int_\Omega\nut^{(n)}(k^{j,l})|\Dujl|^2\,dxdt \leq C,
\end{equation*}
for some positive constant $C$.
Therefore, in view of the Vitali-Hahn-Saks theorem,
\begin{equation}\label{s:conv:nuT(k)Dv:jl:L2}
\nut^{(n)}(k^j)|\Dujl|^2\xrightarrow[l\to\infty]{} \nut^{(n)}(k^j)|\mathbf{D}(\bu^j)|^2\ \ \mbox{in}\ \ L^1(0,T;L^1(\Omega)).
\end{equation}
On the other hand, from (\ref{e-visc-Carath}) and (\ref{ae:conv:ujl})-(\ref{ae:conv:kjl}), one has
\begin{equation}\label{ae:conv:nuP(kjl)ub:jl}
\nu_P^{(n)}(k^{j,l})|\bu^{j,l}|^\beta\xrightarrow[l\to\infty]{}\nu_P^{(n)}(k^{j})|\bu^{j}|^\beta\quad \mbox{a.e.\ \ in}\ \ Q_T.
\end{equation}
Using (\ref{nuP:n:bd}) and (\ref{est4:eq-u:jl}), we can show that
\begin{equation}\label{bd:nuP(kjl)ujb}
\int_0^T\int_\Omega \left|\nu_P^{(n)}(k^{j,l})|\bu^{j,l}|^\beta\right|^qdxdt\leq C(n)\quad \forall\ q: 1< q\leq\frac{r_u}{\beta},
\end{equation}
for some positive constant $C$.
Note that, due to assumption \eqref{Hyp:theta:gamma}, $\frac{r_u}{\beta}>1$.
As a consequence of (\ref{ae:conv:nuP(kjl)ub:jl}) and (\ref{bd:nuP(kjl)ujb}) there holds
\begin{equation}\label{w:conv:nuP(kjl)ujlb}
\nu_P^{(n)}(k^{j,l})|\bu^{j,l}|^\beta\xrightharpoonup[l\to\infty]{}  \nu_P^{(n)}(k^{j})|\bu^{j}|^\beta\ \ \mbox{in}\ \ L^q(0,T;L^q(\Omega))\quad \forall\ q: 1< q\leq\frac{r_u}{\beta}.
\end{equation}
Finally, we use \eqref{w:conv:kjl}-(\ref{w:conv:kjl'}), (\ref{w:conv:nuD(k)Dk:jl}), (\ref{w:conv:e(kjl)}), (\ref{s:conv:nuT(k)Dv:jl:L2}) and (\ref{w:conv:nuP(kjl)ujlb}), to pass to the limit $l\to\infty$ in the approximate weak formulation (\ref{weak-form-k:tr:ap}) so that
\begin{equation}\label{weak-form-k:tr:j}
\begin{split}
& \frac{d}{dt}\int_{\Omega} k^{j}(t)w_i\,dx - \int_{\Omega}k^{j}(t){\bu}^{j}(t)\cdot\nabla w_i\,dx
  +\int_{\Omega}\nu_D^{(n)}(k^{j}(t))\nabla k^{j}(t)\cdot\nabla w_i\,dx \\
& + \int_{\Omega}\varepsilon(k^{j}(t))w_i\,dx = \\
& \int_{\Omega}\nut^{(n)}(k^j)|\mathbf{D}({\bu}^j)|^2w_i\,dx + \int_{\Omega}\nu_P^{(n)}(k^{j}(t))|{\bu}^{j}(t)|^\beta w_i\,dx
\qquad \forall\ i\in\mathds{N}.
\end{split}
\end{equation}

By a classical reasoning (see e.g.~\cite[Ch.~III \S 3.2]{Temam:1979}), we can use (\ref{weak-form-u:tr:ap}), (\ref{weak-form-u:n:j}) with (\ref{eq:0})$_1$, (\ref{sc:vn0}), (\ref{str:conv:ujl}), (\ref{w:conv:nuT(k)Dv:jl}),  from one hand, and (\ref{weak-form-k:tr:ap}), (\ref{weak-form-k:tr:j}) with (\ref{eq:0})$_2$, (\ref{sc:rn0}), (\ref{str:conv:kjl}), (\ref{w:conv:nuD(k)Dk:jl}), (\ref{w:conv:e(kjl)}), (\ref{s:conv:nuT(k)Dv:jl:L2}), (\ref{w:conv:nuP(kjl)ujlb}), on the other, to show that
\begin{equation}\label{eq:0:uj:kj}
{\bu}^{j}(0)={\bu}^{j}_{0}\quad\mbox{and}\quad  k^{j}(0)=k^{j}_{0}\quad\mbox{in}\ \Omega.
\end{equation}

Now we can proceed as in~\cite{O:2024a} to show that (\ref{f:diss-turb2}) implies
\begin{equation} \label{f:diss-turb2:j}
e(k^{j})\geq 0\quad \mbox{a.e. in}\ Q_T,
\end{equation}
and that \eqref{f:diss-turb2:j}, along with (\ref{epslion(k):e(k)}), (\ref{weak-form-u:n:j}) and  (\ref{weak-form-k:tr:j}), allow us to prove that
\begin{equation}\label{kj>=0}
k^{j}\geq 0\quad\mbox{a.e. in}\ Q_T.
\end{equation}
And as a consequence of (\ref{epslion(k):e(k)}) and (\ref{kj>=0}), one has
\begin{equation}\label{epsilon(kj)>=0}
\varepsilon(k^{j})\geq 0\quad\mbox{a.e. in}\ Q_T.
\end{equation}

In the next section we aim to obtain estimates that are independent of $j$.

\section{Estimates independent of $j$}

Let us first obtain estimates for ${\bu}^{j}$, $\nabla{\bu}^{j}$,  and $\partial_t{\bu}^{j}$ that are independent of $j$.
By linearity and continuity, we can show from (\ref{weak-form-u:n:j}) that
\begin{equation}
\begin{split}\label{weak-form-u:n:j:v}
& \frac{d}{dt}\int_{\Omega} {\bu}^j(t)\cdot\bv\,dx - \int_{\Omega}\Phi_n(|\bu^{j}(t)|^2){\bu}^j(t)\otimes{\bu}^j(t):\nabla\bv\,dx
  +\int_{\Omega}\nut^{(n)}(k^j(t))\,\mathbf{D}({\bu}^j(t)):\mathbf{D}(\bv)\,dx \\
&  +\int_{\Omega}\left(c_{Da}+c_{Fo}|{\bu}^j(t)|^{\alpha-2}\right){\bu}^j(t)\cdot\bv\,dx
=
\int_\Omega \mathbf{g}(t)\cdot\bv\,dx
\end{split}
\end{equation}
holds for all $t\in(0,T)$ and all $\bv\in \mathbf{V}\cap L^\alpha(\Omega)^d$.
At any time $t\in(0,T]$, we take $\bv={\bu}^{j}(t)$ in (\ref{weak-form-u:n:j:v}) so that
\begin{equation}\label{est1:eq-u:j}
\begin{split}
& \frac{1}{2}\frac{d}{dt}\|{\bu}^{j}(t)\|_{2}^2
+\int_{\Omega}\nut^{(n)}(k^{j}(t))|\mathbf{D}({\bu}^{j}(t))|^2\,dx
+\int_\Omega\left(c_{Da}+c_{Fo}|{\bu}^{j}(t)|^{\alpha-2}\right)|{\bu}^{j}(t)|^2\,dx \\
&
=\int_\Omega\mathbf{g}(t)\cdot{\bu}^{j}(t)\,dx\qquad \forall\ t\in(0,T).
\end{split}
\end{equation}
Using (\ref{est1:eq-u:j}) instead, we can see estimates (\ref{est2:eq:u:T})-(\ref{est4:eq-u:jl}) also hold here, with
${\bu}^{j}$ in the place of ${\bu}^{j,l}$.

On the other hand, proceeding as in~\cite{O:2024a}, using assumption \eqref{g:V'}, \eqref{nuT:n:bd}, \eqref{Phi}-\eqref{Phi:d},
estimate (\ref{est2:eq:u:T}) with ${\bu}^{j}$ and $k^j$ in the places of ${\bu}^{j,l}$ and $k^{j,l}$, and  \eqref{weak-form-u:n:j}, we can show that
\begin{equation}\label{est:dt:u:j}
\int_{0}^{T}\|\pt\bu^j(t)\|_{\mathbf{V}^{s'}}^rdt \leq C(n),
\qquad r:=\min\left\{2,\frac{r_u}{2},\alpha'\right\},
\end{equation}
for some positive constant $C$, and where  $\mathbf{V}^{s'}$ denotes the dual space of $\mathbf{V}^{s}$.
Note that if $r_u=\alpha$, then $\alpha>2$ and consequently $r>1$.

We are now going to obtain estimates for $k^j$ and $\nabla k^j$ that are independent of $j$ (and of $n$).
By linearity and continuity, we can infer from (\ref{weak-form-k:tr:j}) that
\begin{equation}\label{weak-form-k:tr:j:w}
\begin{split}
& \frac{d}{dt}\int_{\Omega} k^{j}(t)w\,dx - \int_{\Omega}k^{j}(t){\bu}^{j}(t)\cdot\nabla w\,dx
  +\int_{\Omega}\nu_D^{(n)}(k^{j}(t))\nabla k^{j}(t)\cdot\nabla w\,dx
+ \int_{\Omega}\varepsilon(k^{j}(t))w\,dx \\
& = \int_{\Omega}\nut^{(n)}(k^{j}(t))|\mathbf{D}({\bu}^{j}(t))|^2w\,dx + \int_{\Omega}\nu_P^{(n)}(k^{j}(t))|{\bu}^{j}(t)|^\beta w\,dx
\end{split}
\end{equation}
holds for all $t\in(0,T)$ and all $w\in W^{1,2}_0(\Omega)$.
Note that the reasoning used to obtain (\ref{est4_0:eq-k})-(\ref{est4:eq-k:t}) is no longer valid here, because the estimates there depend on $j$ (and $n$).
The estimate established in the first next lemma results from testing (\ref{weak-form-k:tr:j:w}) with $w=\mathcal{T}_1(k^j)$, where
$\mathcal{T}_1(k^j)$ is the truncation of $k^j$ defined in (\ref{trunc:T}) for $n=1$.

\begin{lemma}\label{lem:est:kj}
Assume the identity (\ref{weak-form-k:tr:j:w}) is valid for ${\bu}^{j}$ and $k^j$ in the above conditions.
If (\ref{Hyp:theta:gamma}) holds,
then there exists an independent of $j$ (and $n$) positive constant $K$ such that
\begin{equation}\label{est:kj:infty:sup:1}
\sup_{t\in[0,T]}\|k^{j}(t)\|_1 + \int_0^T\|k^j(t)\|_{\vartheta+1}^{\vartheta+1}dt \leq K.
\end{equation}
\end{lemma}

\begin{proof}
Taking $w=\mathcal{T}_1(k^j)$ in (\ref{weak-form-k:tr:j:w}), we get
\begin{equation}\label{weak-form-k:tr:j:w=T1}
\begin{split}
& \frac{d}{dt}\|\mathcal{H}_1(k^j(t))\|_1  -\int_\Omega{\bu}^{j}(t)\cdot\nabla\mathcal{H}_1(k^j(t))\,dx \\
& +
\int_{\Omega}\nu_D^{(n)}(k^{j}(t))\nabla k^j(t)\cdot\nabla\big(\mathcal{T}_1(k^j(t))\big)\,dx + \int_\Omega\varepsilon(k^j(t))\mathcal{T}_1(k^j(t))\,dx = \\
&
\int_{\Omega}\nut^{(n)}(k^{j}(t))|\mathbf{D}({\bu}^{j}(t))|^2\mathcal{H}_1(k^j(t))\,dx + \int_{\Omega}\nu_P^{(n)}(k^{j}(t))|{\bu}^{j}(t)|^\beta\mathcal{H}_1(k^j(t))\,dx,
\end{split}
\end{equation}
where $\mathcal{H}_1$ is the primitive function of $\mathcal{T}_1$,
\begin{equation}\label{funct:H1}
\mathcal{H}_1(k):=\int_0^k\mathcal{T}_1(s)\,ds.
\end{equation}
Proceeding as in~\cite{O:2024a} (see also~\cite{BLM:2011}), in particular using assumptions (\ref{f:P-turb}) and (\ref{f:dissip-turb}), together with (\ref{eq:0:uj:kj})$_2$ and (\ref{kj>=0}), and estimate (\ref{est2:eq:u:T}) with ${\bu}^{j}$ and $k^{j}$ in the places of  ${\bu}^{j,l}$ and $k^{j,l}$,
we obtain
\begin{equation}\label{est2:eq-k:a:sup}
\sup_{t\in[0,T]}\|\mathcal{H}_1(k^j(t))\|_1 + c_\varepsilon\int_0^T\|k^j(t)\|_{\vartheta+1}^{\vartheta+1}dt \leq \|\mathcal{H}_1(k^j_0)\|_1 + C + C_P\int_0^T\int_\Omega |{\bu}^j|^\beta (1+|k^j|)^\gamma dxdt\,,
\end{equation}
for some positive constant $C$.
On the other hand, by the definition of the function $\mathcal{H}_1$, it can be easily proved the existence of two absolute positive constants $C_1$ and $C_2$ such that
\begin{equation*}
k-C_1\leq \mathcal{H}_1(k)\leq C_2k\qquad \forall\ k\in\mathds{R}^+_0.
\end{equation*}
Using this fact, together with the Young inequality, we get from  (\ref{est2:eq-k:a:sup})
\begin{equation*}
\begin{split}
& \sup_{t\in[0,T]}\|k^{j}(t)\|_1 + c_\varepsilon\int_0^T\|k^j(t)\|_{\vartheta+1}^{\vartheta+1}dt \leq \\
& C_1\|k_0^j\|_1 + C_2+C_3'\left(\int_0^T\|{\bu}^j(t)\|^\beta_\beta dt + \int_0^T\int_\Omega |{\bu}^j|^\beta |k^j|^\gamma dxdt \right) \leq \\
& C_1\|k_0^j\|_1 + C_2+C_3\left(\int_0^T\|{\bu}^j(t)\|^\beta_\beta dt + \int_0^T\|{\bu}^j(t)\|^{r_u}_{r_u} dt +
\int_0^T\|k^j(t)\|_{\frac{\gamma r_u}{r_u-\beta}}^{\frac{\gamma r_u}{r_u-\beta}}dt \right),\quad \beta<r_u,
\end{split}
\end{equation*}
for some positive constants $C_1$, $C_2$, $C_3'$ and $C_3$.
Observe that, analogously to (\ref{est4:eq-u:jl}), we can also use parabolic interpolation to show that
\begin{equation}\label{est:hj:beta}
\int_0^T\|{\bu}^{j}(t)\|_{\beta}^{\beta}dt\leq C,\quad \beta\leq r_u,
\end{equation}
for the positive constant $C$.
The estimate (\ref{est4:eq-u:jl}) with ${\bu}^{j}$ in the place of ${\bu}^{j,l}$, together with (\ref{sc:rn00}), (\ref{sc:rn0})$_2$ and (\ref{est:hj:beta}), imply
\begin{equation}\label{est:hj:infty:sup}
\sup_{t\in[0,T]}\|k^{j}(t)\|_1 + c_\varepsilon\int_0^T\|k^j(t)\|_{\vartheta+1}^{\vartheta+1}dt \leq C_1 +
C_2\int_0^T\|k^j(t)\|_{\frac{\gamma r_u}{r_u-\beta}}^{\frac{\gamma r_u}{r_u-\beta}}dt,
\end{equation}
for some positive constants $C_1$ and  $C_2$.
Now, in view of assumption (\ref{Hyp:theta:gamma}),
\begin{equation}\label{est:gamma:1}
\frac{\gamma r_u}{r_u-\beta} \leq \vartheta+1,
\end{equation}
and so
we can use the Hölder and Young inequalities to show that (\ref{est:kj:infty:sup:1}) follows from (\ref{est:hj:infty:sup}).
\end{proof}

To obtain an estimate for $\nabla k^j$, we consider the following special test function in the spirit of \cite{R:1991} (see also \cite{BP:1984,BG:1992}),
\begin{equation*}
  \upsilon(k^j):=1-\frac{1}{(1+k^j)^\delta},\quad \mbox{with}\ \ 0<\delta \ll 1.
\end{equation*}
Observe that $\upsilon(k^j)$ satisfies to
\begin{equation}\label{grad-test-kj}
 0\leq \upsilon(k^j)\leq 1,\quad \nabla\upsilon(k^j)=\delta\frac{\nabla k^j}{(1+k^j)^{\delta+1}}
\end{equation}
and therefore $\upsilon(k^j)\in L^2(0,T;H^1_0(\Omega))$. %

\begin{lemma}\label{lem:est:grad:kj}
Assume we are in the conditions of Lemma~\ref{lem:est:kj}.
\begin{enumerate}[label=(\arabic*),leftmargin=*,topsep=0pt]
\item If $\zeta>1$, then there exists an independent of $j$ (and $n$) positive constant $K$ such that
\begin{equation}\label{est:grad:kj:L2:0}
\int_0^T\|\nabla k^j(t)\|_2^2dt \leq \frac{K}{\delta}\qquad \forall\ \delta>0\ \ \mbox{small}.
\end{equation}
\item If $0\leq \zeta\leq 1$, then there exist independent of $j$ (and $n$) positive constants $K_1$ and $K_2$ such that
\begin{equation}\label{est:grad:kj:Lq:0}
\int_0^T\|\nabla k^j(t)\|_q^qdt \leq K_1+\frac{K_2}{\delta}\quad \forall\ \delta>0\ \ \mbox{small},
\quad q<1+\frac{d\zeta + 1}{d+1}.
\end{equation}
\end{enumerate}
\end{lemma}

\begin{proof}
Taking $w=\upsilon(k^j(t))$ in (\ref{weak-form-k:tr:j:w}) so that, after integrating the resulting equation between $0$ and $t\in(0,T)$, using (\ref{eq:0:uj:kj})$_2$, and taking the supreme in the interval $[0,T]$, we get
\begin{equation}\label{syst-ODEs-kj:trunc}
\begin{split}
   &
   \sup_{t\in[0,T]}\|\Upsilon(k^j(t))\|_1
   +\int_0^T\int_\Omega {\bu}^{j}\cdot\nabla\Upsilon(k^j)\,dxdt
   +\delta\int_0^T\int_\Omega \nu_D^{(n)}(k^j)\frac{|\nabla k^j|^2}{(1+k^j)^{\delta+1}}\,dxdt \\
   & +\int_0^T\int_\Omega \varepsilon(k^j)\upsilon(k^j)\,dxdt \\
   &
   =   \|\Upsilon(k^j_0)\|_1 +
   \int_0^T\int_\Omega \nut^{(n)}(k^j)|\mathbf{D}({\bu}^{j})|^2\upsilon(k^j)\,dxdt
   +\int_0^T\int_\Omega \nu_P^{(n)}(k^j)|{\bu}^{j}|^\beta\,\upsilon(k^j)\,dxdt,
\end{split}
\end{equation}
where $\Upsilon(k)$ is the following primitive function of $\upsilon(k)$,
\begin{equation}\label{Primitive:Phi}
\Upsilon(k):=\int_0^k\upsilon(s)\,ds.
\end{equation}
The second l.h.s. term of (\ref{syst-ODEs-kj:trunc}) vanishes, since ${\bu}^{j}$ is divergence free and has zero trace on the boundary $\partial\Omega$.
The fourth l.h.s. term is nonnegative due to (\ref{f:diss-turb2:j}) and  (\ref{grad-test-kj})$_1$.
In addition, since $\upsilon(k^j)\leq 1$, we obtain from (\ref{syst-ODEs-kj:trunc}),
\begin{equation}\label{est1:grad-k}
   \delta\int_0^T\int_\Omega \nu_D^{(n)}(k^j)\frac{|\nabla k^j|^2}{(1+k^j)^{\delta+1}}\,dxdt \leq \|\Upsilon(k^j_0)\|_1 +  C
   + C_P\int_0^T\int_\Omega |{\bu}^j|^\beta (1+k^j)^\gamma dxdt,
\end{equation}
for the positive constant $C$ from the counterpart estimate of (\ref{est2:eq:u:T}) that we also have used, as well as assumption (\ref{f:P-turb}), and (\ref{trunc:T})-(\ref{comp:n}).
Using assumption (\ref{f:diff-turb}) together with (\ref{trunc:T})-(\ref{comp:n}), (\ref{sc:rn00}) and (\ref{sc:rn0})$_2$,  (\ref{Primitive:Phi}), and with the fact that $|\Upsilon(k)|\leq |k|$ for all $k\in\mathds{R}$, one gets from (\ref{est1:grad-k})
\begin{equation*}
   \delta c_D\int_0^T\int_\Omega \frac{|\nabla k^j|^2}{(1+k^j)^{\delta+1-\zeta}}\,dxdt \leq \|k_0\|_1 +  C +
   C_P\int_0^T\int_\Omega |{\bu}^j|^\beta (1+k^j)^\gamma dxdt.
\end{equation*}
Proceeding as we did for (\ref{est:hj:infty:sup}), we get
\begin{equation}\label{est:hj:delta:1}
\delta c_D\int_0^T\int_\Omega \frac{|\nabla k^j|^2}{(1+k^j)^{\delta+1-\zeta}}\,dxdt \leq
C_1 +
C_2\int_0^T\|k^j(t)\|_{\frac{\gamma r_u}{r_u-\beta}}^{\frac{\gamma r_u}{r_u-\beta}}dt,\quad \beta<r_u
\end{equation}
for some positive constants $C_1$ and $C_2$.
Again, in view of assumption (\ref{Hyp:theta:gamma}), (\ref{est:gamma:1}) holds true.
Thus, reasoning for the r.h.s. term of (\ref{est:hj:delta:1}) as we did for the corresponding term of (\ref{est:hj:infty:sup}), and then using (\ref{est:kj:infty:sup:1}), we obtain
\begin{equation}\label{est:grad:kj:L2:1}
\delta c_D\int_0^T\int_\Omega \frac{|\nabla k^j|^2}{(1+k^j)^{\delta+1-\zeta}}\,dxdt \leq C
\end{equation}
for some positive constant $C$.

(1) If $\zeta>1$, we observe that this assumption implies $\zeta\geq\delta+1$ for $\delta>0$ small enough, and this, in turn, implies $(1+k^j)^{\zeta-(\delta+1)}\geq 1$.
Hence, we can easily show that (\ref{est:grad:kj:L2:1}) implies (\ref{est:grad:kj:L2:0}).

(2)
If $0\leq \zeta\leq 1$, then $\delta+1-\zeta>0$, and we can apply the Young inequality, together with estimate (\ref{est:grad:kj:L2:1}), so that
\begin{equation}\label{est:grad:kj:q} 
\begin{split}
\int_0^T\|\nabla k^j(t)\|^q_qdt = &\int_0^T\int_\Omega \frac{|\nabla k^j|^q}{(1+k^j)^{(\delta+1-\zeta)\frac{q}{2}}}(1+k^j)^{(\delta+1-\zeta)\frac{q}{2}}dxdt,
\quad q< 2
\\
\leq &
\delta c_D\int_0^T\int_\Omega \frac{|\nabla k^j|^2}{(1+k^j)^{(\delta+1-\zeta)}}dxdt + C_1' +
C_2\int_0^T\int_\Omega  |k^j|^{(\delta+1-\zeta)\frac{q}{2-q}}dxdt \\
\leq &
C_1 + C_2\int_0^T\|k^j(t)\|_{(\delta+1-\zeta)\frac{q}{2-q}}^{(\delta+1-\zeta)\frac{q}{2-q}}dt
\end{split}
\end{equation}
for some positive constants $C_1'$, $C_1$ and $C_2$.

Let us now consider the following function related with the weight $(1+k^j)^{\zeta-\delta-1}$ of the first r.h.s. integral in (\ref{est:grad:kj:q}),
\begin{equation}\label{f:Lambda}
\Lambda(k):=\int_0^k (1+s)^{\frac{\zeta-\delta-1}{2}}ds.
\end{equation}
It can be easily proved the existence of two positive constants $C_1$ and $C_2$ such that
\begin{equation}\label{Bdn:f:Lambda}
C_1\left[(1+k)^{\frac{\zeta-\delta+1}{2}}-1\right]\leq \Lambda(k) \leq C_2(1+k)^{\frac{\zeta-\delta+1}{2}}\qquad \forall\ k\in\mathds{R}^+_0.
\end{equation}
Moreover, using (\ref{f:Lambda}) and the Sobolev inequality, together with estimate (\ref{est:grad:kj:L2:1}),
there holds
\begin{equation}\label{Bdn:norm:Lambda}
\begin{split}
\int_0^T\left(\|\Lambda(k^j(t))\|_2^2+\|\nabla\Lambda(k^j(t))\|_2^2\right)dt & \leq
C_1\int_0^T\left\|\nabla\big(\Lambda(k^j(t))\big)\right\|_2^2dt  \\
& =
C_1 \int_0^T\int_\Omega \frac{|\nabla k^j|^2}{(1+k^j)^{\delta+1-\zeta}}\,dxdt
\leq
\frac{C_2}{\delta}
\end{split}
\end{equation}
for some positive constants $C_1$ and $C_2$.

Then, we use interpolation so that
\begin{equation}\label{interp:kj}
\|k^j\|_{(\delta+1-\zeta)\frac{q}{2-q}}\leq \|k^j\|_1^{1-\lambda} \|k^j\|_{(\zeta-\delta+1)\frac{\sigma}{2}}^\lambda,\qquad
\lambda=\frac{\sigma}{q}\frac{(\zeta-\delta+1)[(\delta+2-\zeta)q-2]}{(\delta+1-\zeta)[(\zeta-\delta+1)\sigma-2]},
\end{equation}
where $\sigma$ denotes the Sobolev conjugate of $2$.
Next, we use (\ref{Bdn:f:Lambda}), the Sobolev inequality and (\ref{Bdn:norm:Lambda}) so that
\begin{equation}\label{est5:grad-k}
\begin{split}
\int_0^T\|k^j(t)\|_{(\zeta-\delta+1)\frac{\sigma}{2}}^{\zeta-\delta+1}dt< &
\int_0^T\left(\int_\Omega (1+k^j)^{\frac{\zeta-\delta+1}{2}\sigma}dx\right)^{\frac{2}{\sigma}}dt \leq
\frac{1}{C_1}\int_0^T\|\Lambda(k(t))+1\|_\sigma^2\,dt \\
\leq & C_2\int_0^T\left\|\nabla\big(\Lambda(k^j(t))\big)\right\|_2^2dt \leq\frac{C_3}{\delta},
\end{split}
\end{equation}
where $C_1$ is the corresponding constant from (\ref{Bdn:f:Lambda}), and $C_2$ and $C_3$ are two other positive constants.

We raise (\ref{interp:kj}) to the power $(\delta+1-\zeta)\frac{q}{2-q}$, then we integrate the resulting inequality
between $0$ and $t\in(0,T)$ and take the supreme in $[0,T]$, which, in view of (\ref{est:kj:infty:sup:1}), implies
\begin{equation}\label{est6:grad-k}
\begin{split}
\int_0^T\|k^j(t)\|_{(\delta+1-\zeta)\frac{q}{2-q}}^{(\delta+1-\zeta)\frac{q}{2-q}}\,dt\leq &
\sup_{t\in[0,T]}\|k^j(t)\|_1^{(1-\lambda)(\delta+1-\zeta)\frac{q}{2-q}}
\int_0^T \|k^j(t)\|_{(\zeta-\delta+1)\frac{\sigma}{2}}^{\lambda(\delta+1-\zeta)\frac{q}{2-q}} dt \\
\leq & C \int_0^T \|k^j(t)\|_{(\zeta-\delta+1)\frac{\sigma}{2}}^{\lambda(\delta+1-\zeta)\frac{q}{2-q}} dt
\end{split}
\end{equation}
for some positive constant $C$.
In order to use (\ref{est5:grad-k}), we choose $q$ so that
\begin{equation}\label{q1:est:grad:k}
\lambda(\delta+1-\zeta)\frac{q}{2-q}=\zeta-\delta+1
\Leftrightarrow
q=\frac{2\sigma(\zeta-\delta+2)-4}{3\sigma-2}
\Leftrightarrow
\left\{
\begin{array}{ll}
q=\frac{d\zeta-d\delta + d+2}{d+1}, & d\not=2
 \\
q<\frac{2\zeta-2\delta+4}{3}, & d=2.
\end{array}
\right.
\end{equation}
Hence, combining (\ref{est5:grad-k}) with (\ref{est6:grad-k}), we have
\begin{equation}\label{est7:grad-k}
\int_0^T\|k^j(t)\|_{(\delta+1-\zeta)\frac{q}{2-q}}^{(\delta+1-\zeta)\frac{q}{2-q}}\,dt\leq
C_1 \int_0^T \|k^j(t)\|_{(\zeta-\delta+1)\frac{\sigma}{2}}^{\zeta-\delta+1} dt\leq
\frac{C_2}{\delta}
\end{equation}
for some positive constants $C_1$ and $C_2$.

Note that, in view of (\ref{est:grad:kj:q}) and (\ref{q1:est:grad:k}),
\begin{equation*}
q< 2 \Leftrightarrow \zeta-\delta+1<2
\end{equation*}
which is true for a sufficiently small $\delta>0$, and because $0\leq \zeta\leq 1$.
Plugging (\ref{est7:grad-k}) into (\ref{est:grad:kj:q}), and observing the requirements for the exponent $q$ declared at (\ref{interp:kj}) and (\ref{q1:est:grad:k}), we prove that (\ref{est:grad:kj:Lq:0}) holds true.
\end{proof}


Combining Lemmas~\ref{lem:est:kj} and \ref{lem:est:grad:kj}, we can now establish the following result.
Note that estimate (\ref{est4:eq-k:t}) is not an alternative here, because it depends on $j$ (and $n$).

\begin{lemma}\label{lem:kj:any:q}
Assume we are in the conditions of Lemmas~\ref{lem:est:kj}-\ref{lem:est:grad:kj}.
Then, there exists an independent of $j$ (and $n$) positive constant $K$ such that
\begin{equation}\label{est:kj:rho:q<rk}
\int_0^T\|k^j(t)\|^{r}_{r} dt \leq
\frac{K}{\delta} \quad \forall\ \delta>0,\qquad r<r_k,\quad\mbox{for $r_k$ given in (\ref{ru:rk}).}
\end{equation}
\end{lemma}

\begin{proof}
For any $\zeta\geq 0$ and $\delta:0<\delta<<1$, we can use interpolation so that
\begin{equation}\label{int:rho:any:q}
\|k^j(t)\|_r\leq \|k^j(t)\|_1^{1-\lambda}\|k^j(t)\|_{(\zeta-\delta+1)\frac{\sigma}{2}}^\lambda,\quad
\lambda=\frac{(r-1)(\zeta-\delta+1)\sigma}{r[\sigma(\zeta-\delta+1)-2]},
\end{equation}
where $\sigma$ is the Sobolev conjugate of $2$.
Raising (\ref{int:rho:any:q}) to the power $r$, and then, in order to use (\ref{est5:grad-k}) again, we choose $r$ so that
\begin{equation}\label{rho:any:q}
r\lambda=\zeta-\delta+1 \Leftrightarrow r=\zeta-\delta+2-\frac{2}{\sigma}
\Rightarrow r:
\left\{
\begin{array}{ll}
=\zeta-\delta+1+\frac{2}{d} , & d\not=2, \\
<\zeta-\delta+2, & d=2.
\end{array}
\right.
\end{equation}
Next, we integrate the resulting inequality between $0$ and $t\in[0,T]$, to get, after the application of estimates (\ref{est:kj:infty:sup:1}) and (\ref{est5:grad-k}) in the final part,
\begin{equation}\label{int:0T:rho:any:q}
\int_0^T\|k^j(t)\|_r^r dt\leq \sup_{t\in[0,T]}\|k^j(t)\|_1^{\frac{\sigma-2}{\sigma}}\int_0^T\|k^j(t)\|_{(\zeta-\delta+1)\frac{\sigma}{2}}^{\zeta-\delta+1} \leq\frac{C}{\delta},
\end{equation}
where $C$ is a positive constant.
Hence, (\ref{est:kj:rho:q<rk}), in the case of $r<r_k=\zeta+1+\frac{2}{d}$, is now a direct consequence of (\ref{rho:any:q}) and (\ref{int:0T:rho:any:q}).
\end{proof}

\begin{remark}\label{Rem3:Proof:Exist}
Note that, due to (\ref{est:kj:infty:sup:1}), (\ref{est:kj:rho:q<rk}) does also hold true for $r=\vartheta+1$, but, in view of assumption (\ref{eq:Cond2}), this estimate is worse.
On the other hand, if $d=3$, we obtain $r<r_k=\zeta+\frac{5}{3}$ in (\ref{est:kj:rho:q<rk}), as in~\cite{BLM:2011}, and if $d=2$, we get $r<r_k=\zeta+2$.
\end{remark}

In order to obtain other estimates that neither depend on $j$ nor on $n$, we can also proceed as we did for (\ref{est:dt:k:jl}), but now using the identity (\ref{weak-form-k:tr:j:w}) instead.
We first note that, by combining the Hölder inequality with estimate (\ref{est2:eq:u:T}), that still holds with ${\bu}^{j}$ and $k^{j}$ in the places of ${\bu}^{j,l}$ and $k^{j,l}$, one has
\begin{equation}\label{est:indep:jn:3:1}
\int_0^T\left\|\nut^{(n)}(k^{j}(t))|\mathbf{D}({\bu}^{j}(t))|^2\right\|_{W^{-1,\varrho}(\Omega)}^{r}dt\leq C,\qquad 1<\varrho<\varrho_1:=\frac{d}{d-1},\quad r=1,
\end{equation}
for some positive constant $C$.

Besides estimates of Lemmas~\ref{lem:est:kj}-\ref{lem:kj:any:q}, we also need to obtain independent of $j$ (and $n$) estimates for the turbulent diffusion term, as well as for the terms of turbulence transport and turbulence production.

We start by estimating the turbulent diffusion term.
\begin{lemma}\label{lem:est:nuD:kj}
Assume we are in the conditions of Lemmas~\ref{lem:est:kj}-\ref{lem:kj:any:q}.
Then, there exists an independent of $j$ (and $n$) positive constant $K$ such that
\begin{equation}\label{est:nD:grad:kj}
\int_0^T\left\|\nu_D^{(n)}(k^{j}(t))\nabla k^j(t)\right\|_\varrho^\varrho dt \leq\frac{K}{\delta}\quad \forall\ \delta>0\ \ \mbox{small},
\quad \varrho<\varrho_2:=\frac{d\zeta+d+2}{d\zeta+d+1}.
\end{equation}
\end{lemma}

\begin{proof}
By using assumption (\ref{f:diff-turb}), together with (\ref{trunc:T})-(\ref{comp:n}), the Hölder inequality and estimate (\ref{est:grad:kj:L2:1}), we can show that
\begin{equation}\label{est4:k':q1}
\begin{split}
&
\int_0^T\int_\Omega \big|\nu_D^{(n)}(k^{j})\nabla k^{j}\big|^\varrho dxdt \leq
C_D^\varrho\int_0^T\int_\Omega (1+k^{j})^{\zeta \varrho}|\nabla k^{j}|^\varrho dxdt \\
& = C_D^\varrho
\int_0^T\int_\Omega (1+k^{j})^{(\delta+1+\zeta)\frac{\varrho}{2}}\frac{|\nabla k^{j}|^\varrho}{(1+k^{j})^{(\delta+1-\zeta)\frac{\varrho}{2}}}dxdt \\
& \leq
C_D^\varrho\left(\int_0^T\int_\Omega \frac{|\nabla k^{j}|^2}{(1+k^{j})^{\delta+1-\zeta}}dxdt\right)^{\frac{\varrho}{2}}
\left(\int_0^T\int_\Omega (1+k^{j})^{(\delta+1+\zeta)\frac{\varrho}{2-\varrho}}dxdt\right)^{\frac{2-\varrho}{\varrho}},\quad \varrho<2, \\
& \leq
\frac{C}{\delta}\left[1+\left(\int_{0}^T\|k^{j}(t)\|_{(\zeta+\delta+1)\frac{\varrho}{2-\varrho}}^{(\zeta+\delta+1)\frac{\varrho}{2-\varrho}}dt\right)^{\frac{2-\varrho}{2}}\right].
\end{split}
\end{equation}
for some positive constant $C$.
Having in mind estimate (\ref{int:0T:rho:any:q}), with $r$ given there by (\ref{rho:any:q}), we choose $\varrho$ so that
\begin{equation}\label{varrho:1}
(\zeta+\delta+1)\frac{\varrho}{2-\varrho}=\zeta-\delta+2-\frac{2}{\sigma}
\Leftrightarrow
\varrho=\frac{2(\zeta+2)\sigma-4}{(2\zeta+3)\sigma-2}-\frac{2\sigma}{(2\zeta+3)\sigma-2}\delta,
\end{equation}
where $\sigma$ denotes the Sobolev conjugate of $2$.
Since $0<\delta<<1$, we have
\begin{equation}\label{varrho<zeta:delta:d}
\varrho<\frac{2(\zeta+2)\sigma-4}{(2\zeta+3)\sigma-2}
\Leftrightarrow
\left\{
\begin{array}{ll}
\varrho<\frac{d\zeta +d+2}{d\zeta+d+1}, & d\not=2
 \\
\varrho<\frac{2\zeta+4}{2\zeta+3}, & d=2.
\end{array}
\right.
\end{equation}
As a consequence of (\ref{est:kj:rho:q<rk}), we can readily see that (\ref{est4:k':q1})  and (\ref{varrho<zeta:delta:d}) imply (\ref{est:nD:grad:kj}).
\end{proof}

\begin{remark}\label{Rem4:Proof:Exist}
Note that any $\varrho$ in the conditions of (\ref{varrho<zeta:delta:d}) satisfies also to $\varrho<2$, as required by (\ref{est4:k':q1}).
On the other hand, if $d=3$, we obtain $\varrho<\frac{3\zeta+5}{3\zeta+4}$ in (\ref{est:nD:grad:kj}), as in~\cite{BLM:2011}, and if $d=2$, we get $\varrho<\frac{2\zeta+4}{2\zeta+3}$.
\end{remark}

From (\ref{est:nD:grad:kj}), one immediately has
\begin{equation}\label{est:indep:jn:3:3}
\begin{split}
& \int_0^T\left\|\operatorname{div}(\nu_D^{(n)}(k^{j}(t))\nabla k^j(t))\right\|_{W^{-1,\varrho}(\Omega)}^{\varrho}dt \leq
 \int_0^T\left\|\nu_D^{(n)}(k^{j}(t))\nabla k^j(t)\right\|_{\varrho}^{\varrho}dt \leq \frac{K}{\delta},
\end{split}
\end{equation}
for $\varrho$ satisfying (\ref{est:nD:grad:kj}) (see also (\ref{varrho<zeta:delta:d})).

Next, we estimate the term of turbulence transport.

\begin{lemma}\label{lem:est:kj:uj}
Assume we are in the conditions of Lemmas~\ref{lem:est:kj}-\ref{lem:est:nuD:kj}.
Then, there exists an independent of $j$ (and $n$) positive constant $K$ such that
\begin{equation}\label{est:kj:uj}
\int_0^T\left\|k^j(t){\bu}^j(t)\right\|_\varrho^\varrho dt \leq\frac{K}{\delta}\quad \forall\ \delta>0\ \ \mbox{small}
\end{equation}
for
\begin{equation}\label{est:kj:uj:rho}
\varrho<\varrho_3:=\max\left\{\frac{2(d\zeta+d+2)(d+2)}{d(d\zeta+3d+6)},\frac{\alpha(d\zeta+d+2)}{d\zeta+d\alpha+d+2}\right\}.
\end{equation}
\end{lemma}

\begin{proof}
Using the Hölder inequality, one has
\begin{equation}\label{est2:ku:q1}
\int_0^T \|k^j(t){\bu}^{j}(t)\|_{\varrho}^{\varrho}dt
\leq
\left(\int_0^T\|k^j(t)\|_q^q dt\right)^{\frac{\varrho}{q}}\left(\int_0^T\|{\bu}^{j}(t)\|_{r_u}^{r_u}dt\right)^{\frac{\varrho}{r_u}},
\end{equation}
where, for $q=r$, and $r$ given in (\ref{rho:any:q}),
\begin{equation*}
\begin{split}
\frac{1}{\varrho}=\frac{1}{r_u}+\frac{1}{q} & \Leftrightarrow
\varrho=\frac{r_u[\sigma(\zeta-\delta+2)-2]}{\sigma(\zeta-\delta+2)-2+r_u\sigma} \\
&
\Leftrightarrow
\varrho=
\frac{r_u[\sigma(\zeta+2)-2]}{\sigma(\zeta+2)-2+r_u\sigma}-\tau\delta,\quad \tau:=
\frac{r_u^2\sigma^2}{[\sigma(\zeta-\delta+2)-2+r_u\sigma][\sigma(\zeta+2)-2+r_u\sigma]},
\end{split}
\end{equation*}
and where $r_u$ is given by (\ref{ru:rk}).
Recall that $\sigma$ denotes the Sobolev conjugate of $2$.
Since $\zeta\geq 0$, $0<\delta<<1$ and $\sigma>2$, we have $\tau>0$, which implies that $\tau\delta$ is very small as well.
Hence,
\begin{equation}\label{varsigma1}
\varrho<\varrho_3:=
\left\{
\begin{array}{ll}
\frac{2(d\zeta+d+2)(d+2)}{d(d\zeta+3d+6)} & \mbox{if}\ \ r_u=\frac{2(d+2)}{d},
 \\
\frac{\alpha(d\zeta+d+2)}{d\zeta+d\alpha+d+2}, & \mbox{if}\ \ r_u=\alpha
\end{array}
\right.
\end{equation}
Note that $\varrho_3> 1$ in any case.
Plugging (\ref{est4:eq-u:jl}) and (\ref{est:kj:rho:q<rk}), the first with ${\bu}^{j}$ in the place of ${\bu}^{j,l}$, into
(\ref{est2:ku:q1}), we prove (\ref{est:kj:uj}).
\end{proof}

\begin{remark}\label{Rem5:Proof:Exist}
Estimate (\ref{est:kj:uj}) already gives us a condition depending on the power-law index characterizing the Darcy-Forchheimer drag forces.
In particular, the values of interest of $\varrho_3$ in (\ref{varsigma1}), from the point of view of physics, are
\begin{equation*}
\varrho_3=
\left\{
\begin{array}{ll}
\max\left\{\frac{10}{9}\frac{3\zeta+5}{\zeta+5},\frac{\alpha(3\zeta+5)}{3\zeta+3\alpha+5}\right\}, & d=3, \\
\max\left\{\frac{4(\zeta+2)}{\zeta+6},\frac{\alpha(\zeta+2)}{\zeta+\alpha+2}\right\}, & d=2.
\end{array}
\right.
\end{equation*}
If $r_u=\frac{2(d+2)}{d}$, then $\varrho_3=\frac{10}{9}\frac{3\zeta+5}{\zeta+5}$ if $d=3$, as in~\cite{BLM:2011}, and $\varrho_3=\frac{4(\zeta+2)}{\zeta+6}$ if $d=2$.
However, if $r_u=\alpha$, then $\varrho_3=\frac{\alpha(3\zeta+5)}{3\zeta+3\alpha+5}$ if $d=3$, and $\varrho_3=\frac{\alpha(\zeta+2)}{\zeta+\alpha+2}$ if $d=2$.
\end{remark}

Now, (\ref{est:kj:uj}) and (\ref{est:kj:uj:rho}) imply
\begin{equation}\label{est2:k':q1}
\int_0^T\|\operatorname{div}\big(k^j(t){\bu}^j(t)\big)\|_{W^{-1,\varrho}(\Omega)}^{\varrho}dt \leq
\int_0^T\|k^j(t){\bu}^j(t)\|_{\varrho}^{\varrho}dt \leq   \frac{K}{\delta}\quad \forall\ \delta>0\ \ \mbox{small},
\end{equation}
for $\varrho$ satisfying \eqref{est:kj:uj:rho}.

It last to obtain an estimate for the term of turbulence production.

\begin{lemma}
Assume we are in the conditions of Lemmas~\ref{lem:est:kj}-\ref{lem:est:kj:uj}.
Then, there exists an independent of $j$ (and $n$) positive constant $K$ such that
\begin{equation}\label{est:nuPn:kj:uj:i}
\int_0^T\left\|\nu_P^{(n)}(k^j(t))|{\bu}^j|^\beta\right\|_\varrho^\varrho dt \leq
\frac{K}{\delta} \quad \forall\ \delta>0
\end{equation}
for
\begin{equation}\label{est:nuPn:kj:uj:r}
\varrho<\varrho_4:=\max\left\{\frac{2(d\zeta+d+2)(d+2)}{d\beta(d\zeta+d+2)+ 2\gamma d(d+2)},\frac{(d\zeta +d+2)\alpha}{\beta(d\zeta +d+2)+ \gamma d \alpha}\right\}.
\end{equation}
\end{lemma}

\begin{proof}
Proceeding as we did for (\ref{est4:k':q1}), we can use assumptions (\ref{f:P-turb}), (\ref{eq:Cond2}) and (\ref{Hyp:theta:gamma}), together with (\ref{trunc:T})-(\ref{comp:n}), the Hölder and Young inequalities, and (\ref{est4:eq-u:jl}), this with ${\bu}^{j}$ in the place of ${\bu}^{j,l}$, to show that
\begin{equation}\label{est:kj:nuP:n}
\begin{split}
\int_0^T\int_\Omega \big|\nu_P^{(n)}(k^{j})|{\bu}^j|^\beta\big|^\varrho dxdt \leq &
C_P^\varrho\int_0^T\int_\Omega (1+k^{j})^{\gamma\varrho}|{\bu}^j|^{\beta\varrho} dxdt \\
\leq &
C_P^\varrho\left(\int_0^T\int_\Omega (1+k^{j})^{q}dxdt\right)^{\frac{\varrho\gamma}{q}}\left(\int_0^T\int_\Omega |{\bu}^j|^{r_u} dxdt\right)^{\frac{\varrho\beta}{r_u}}
\\
\leq &
C\left(1+\int_0^T\|k^{j}(t)\|_{q}^{q}dt\right)^{\frac{\varrho\gamma}{q}}
\end{split}
\end{equation}
for some positive constant $C$, and where
    \begin{equation}\label{eq:est:nuP:un:qk}
\begin{split}
\frac{\varrho\gamma}{q}+\frac{\varrho\beta}{r_u}=1\Leftrightarrow &
    \varrho=\frac{r_uq}{\beta q+\gamma\,r_u}<
    \frac{r_u r_k}{\beta r_k+\gamma\,r_u}=
\frac{(d\zeta +d+2)r_u}{(d\zeta +d+2)\beta+ \gamma d r_u} \\
\Leftrightarrow & \varrho<\varrho_4:=
    \left\{
    \begin{array}{l}
     \frac{2(d\zeta+d+2)(d+2)}{d\beta(d\zeta+d+2)+ 2\gamma d(d+2)}
    \ \ \mbox{if}\ \ r_u=2\frac{d+2}{d} \\
    \frac{(d\zeta +d+2)\alpha}{\beta(d\zeta +d+2)+ \gamma d \alpha}
    \ \ \mbox{if}\ \ r_u=\alpha,
    \end{array}
    \right.
\end{split}
    \end{equation}
where $r_k$ is given in (\ref{ru:rk}).
Note that assumptions (\ref{eq:Cond2}) and (\ref{Hyp:theta:gamma}) assure us that $\varrho_k>1$ in any case.
Using estimate (\ref{est:kj:rho:q<rk}), with $q=r$ and $r$ given at \eqref{rho:any:q},  we can infer from (\ref{est:kj:nuP:n}) that (\ref{est:nuPn:kj:uj:i}) holds true.
\end{proof}

\begin{remark}
In the dimensions of physics interest, we have
\begin{equation*}
\varrho_4=
\left\{
\begin{array}{ll}
\max\left\{\frac{30\zeta+50}{3\beta(3\zeta+5)+30\gamma},\frac{\alpha(3\zeta+5)}{\beta(3\zeta+5)+3\alpha\gamma}\right\}
 & \mbox{if}\ d=3, \\
\max\left\{\frac{4\zeta+8}{\beta(\zeta+2)+4\gamma},\frac{\alpha(\zeta+2)}{\beta(\zeta+2)+\alpha\gamma}\right\}, & \mbox{if}\ d=2.
\end{array}
\right.
\end{equation*}
Observe that $\varrho_4=\frac{30\zeta+50}{3\beta(3\zeta+5)+30\gamma}$ if $\alpha<\frac{10}{3}$ and $d=3$, and
$\varrho_4=\frac{4\zeta+8}{\beta(\zeta+2)+4\gamma}$ if $\alpha<4$ and $d=2$.
\end{remark}

Combining the Sobolev and Hölder inequalities with \eqref{est:nuPn:kj:uj:i}, we can show that
\begin{equation}\label{est:indep:jn:3:2}
\begin{split}
& \int_0^T\left\|\nu_P^{(n)}(k^{j}(t))|{\bu}^{j}(t)|^\beta\right\|_{W^{-1,\varrho}(\Omega)}^{\varrho}dt \leq
  C\int_0^T\left\|\nu_P^{(n)}(k^{j}(t))|{\bu}^{j}(t)|^\beta\right\|_{\varrho}^{\varrho}dt\leq
\frac{K}{\delta} \quad \forall\ \delta>0,
\end{split}
\end{equation}
for $\varrho$ satisfying \eqref{est:nuPn:kj:uj:r}, and for some positive constants $C$ and $K$.

Now, we estimate the term of turbulence dissipation by using assumption (\ref{f:dissip-turb}), together with estimate \eqref{est:kj:infty:sup:1} (or (\ref{est:kj:rho:q<rk}) -- see Remark~\ref{Rem3:Proof:Exist}), so that
\begin{equation}\label{est1:indep:jn:4}
\int_0^T\left\|\varepsilon(k^{j}(t))\right\|_{W^{-1,\varrho}(\Omega)}^rdt\leq
\frac{K}{\delta},\quad \forall\ \delta>0,\quad 1<\varrho<\frac{d}{d-1}=\varrho_1,\quad r=1,
\end{equation}
for some positive constant $K$.

Finally, combining (\ref{epsilon(kj)>=0}) with (\ref{est:indep:jn:3:1}), (\ref{est:indep:jn:3:3}), (\ref{est2:k':q1}), (\ref{est:indep:jn:3:2}) and \eqref{est1:indep:jn:4},
one has
\begin{equation}\label{est:indep:jn:3:0}
\int_0^T\left\|\partial_tk^j(t)\right\|_{W^{-1,\varrho}(\Omega)}dt \leq
K_1+\frac{K_2}{\delta}\quad \forall\ \delta>0\ \ \mbox{small},\quad \varrho<\varrho_0:=\min\left\{\varrho_1,\varrho_2,\varrho_3,\varrho_4\right\}
\end{equation}
for some positive constants $C_1$  and $C_2$, and where $\varrho_1$, $\varrho_2$, $\varrho_3$, $\varrho_4$ are defined in (\ref{est:indep:jn:3:1}), (\ref{est:nD:grad:kj}), (\ref{est:kj:uj:rho}) and (\ref{est:nuPn:kj:uj:r}).
The precise definition of $\varrho_0$ is
\begin{equation}\label{varrho:0}
\begin{split}
\varrho_0:= &
\min\Bigg\{
\frac{d}{d-1},
\frac{d\zeta+d+2}{d\zeta+d+1},
\max\left\{\frac{2(d\zeta+d+2)(d+2)}{d(d\zeta+3d+6)},\frac{\alpha(d\zeta+d+2)}{d\zeta+d\alpha+d+2}\right\}, \\
&
\max\left\{\frac{2(d\zeta+d+2)(d+2)}{d\beta(d\zeta+d+2)+ 2\gamma d(d+2)},\frac{(d\zeta +d+2)\alpha}{\beta(d\zeta +d+2)+ \gamma d \alpha}\right\}
\Bigg\}
\end{split}
\end{equation}
Note that $\varrho_1,\ \varrho_2,\ \varrho_3,\ \varrho_4>1$ and therefore $\varrho:1<\varrho<\varrho_0$ can be chosen.

\section{Passing to the limit as $j\to\infty$}

In this section, all the considered  subsequences will still be labeled by the sequence superscript $j$.
As observed in the previous section, estimates (\ref{est2:eq:u:T})-(\ref{est4:eq-u:jl}) do not depend on $j$ and therefore also hold with ${\bu}^{j}$ and $k^{j}$ in the places of ${\bu}^{j,l}$ and $k^{j,l}$.
In view of this and estimate (\ref{est:dt:u:j}), which also does not depend on $j$, we may appeal to the Banach-Alaoglu theorem so that for some subsequences
\begin{alignat}{3}
& \label{w*:conv:uj}
{\bu}^{j}\xrightharpoonup[j\to\infty]{\ast} {\bu}\ \ \mbox{in}\ \ L^\infty(0,T;\mathbf{H}), && \\
  & \label{w:conv:uj}
{\bu}^{j}\xrightharpoonup[j\to\infty]{} {\bu},\ \ \mbox{in}\ L^2(0,T;\mathbf{V})\cap L^{r_u}(0,T;L^{r_u}(\Omega)^d), && \\
  & \label{w:conv:uj'}
\partial_t{\bu}^{j}\xrightharpoonup[j\to\infty]{} \partial_t{\bu}\ \ \mbox{in}\ \ L^r(0,T;\mathbf{V}^{s'}),
\end{alignat}
for $r_u$ given in \eqref{ru:rk} and $r$ given in \eqref{est:dt:u:j}.
From (\ref{est:kj:infty:sup:1}), (\ref{est:grad:kj:L2:0})-(\ref{est:grad:kj:Lq:0}), (\ref{est:kj:rho:q<rk}) and (\ref{est:indep:jn:3:0}), we can also deduce from the Banach-Alaoglu theorem that for some subsequences
\begin{alignat}{2}
& \label{w*:conv:kj}
k^{j}\xrightharpoonup[j\to\infty]{\ast} k\ \ \mbox{in}\ \ L^\infty(0,T;\mathcal{M}(\Omega)), && \\
  & \label{w:conv:kj:W1q}
k^{j}\xrightharpoonup[j\to\infty]{} k\ \ \mbox{in}\ \ L^q(0,T;W^{1,q}_0(\Omega)),\qquad
1<q<\min\left\{2,1+\frac{d\zeta + 1}{d+1}\right\}, && \\
 & \label{w:conv:kj:rho:theta}
k^{j}\xrightharpoonup[j\to\infty]{} k\ \ \mbox{in}\ \ L^q(0,T;L^q(\Omega))\cap L^{\vartheta+1}(0,T;L^{\vartheta+1}(\Omega)),\qquad 1\leq q<r_k, \\
  & \label{w:conv:kj'}
\partial_tk^j\xrightharpoonup[j\to\infty]{} \partial_tk\ \ \mbox{in}\ \ \mathcal{M}(0,T;W^{-1,\rho}(\Omega)),\quad
1<\rho<\rho_0,
\end{alignat}
for $r_k$ given in (\ref{ru:rk}) and $\rho_0$ in \eqref{est:indep:jn:3:0}.

By using estimates (\ref{est2:eq:u:T})-(\ref{est4:eq-u:jl}), with ${\bu}^{j}$ and $k^{j}$ in the places of ${\bu}^{j,l}$ and $k^{j,l}$, together with (\ref{est:dt:u:j}) and the generalized Aubin-Lions compactness lemma (see~\cite[Corollary~6]{Simon:1987}), we have
\begin{equation}\label{str:conv:uj}
{\bu}^{j}\xrightarrow[j\to\infty]{}{\bu}\ \ \mbox{in}\ \ L^q(0,T;L^q(\Omega)^d),\qquad 1\leq q< r_u.
\end{equation}
By the same arguing, from estimates (\ref{est:kj:infty:sup:1}), (\ref{est:grad:kj:L2:0})-(\ref{est:grad:kj:Lq:0}), (\ref{est:kj:rho:q<rk}) and (\ref{est:indep:jn:3:0}), we have
\begin{equation}\label{str:conv:kj}
k^{j}\xrightarrow[j\to\infty]{}k\ \ \mbox{in}\ \ L^q(0,T;L^q(\Omega)),\qquad 1\leq q<r_k.
\end{equation}
Now, in view of (\ref{str:conv:uj}) and (\ref{str:conv:kj}) and the Riesz-Fischer theorem, there exist another subsequences such that
\begin{alignat}{2}
& \label{ae:conv:uj}
{\bu}^{j}\xrightarrow[j\to\infty]{}{\bu}\quad \mbox{a.e.\ \ in}\ \ Q_T,  && \\
  & \label{ae:conv:kj}
k^{j}\xrightarrow[j\to\infty]{}k\quad \mbox{a.e.\ \ in}\ \ Q_T. &&
\end{alignat}

Using (\ref{w:conv:uj}), (\ref{ae:conv:uj}) and (\ref{ae:conv:kj}), and reasoning as in~\cite{O:2024a} (see also \eqref{w:conv:Phi-d:jl} and \eqref{s:conv:nuT(k)Dv:jl:L2}), we can show that
\begin{alignat}{2}
& \label{w:conv:ujxuj}
\Phi_n(|\bu^j|^2)\bu^{j}\otimes\bu^{j}\xrightharpoonup[j\to\infty]{} \Phi_n(|\bu|^2)\bu\otimes\bu\ \ \mbox{in}\ \ L^{\frac{d+2}{d}}(0,T;L^{\frac{d+2}{d}}(\Omega)^{d\times d}), && \\
& \label{w:conv:f(uj)}
|\bu^{j}|^{\alpha-2}\bu^{j}\xrightharpoonup[j\to\infty]{} |\bu|^{\alpha-2}\bu
\ \ \mbox{in}\ \ L^{\alpha'}(0,T;L^{\alpha'}(\Omega)^d), && \\
& \label{w:conv:nuT(k)Dv:j}
\nut^{(n)}(k^{j})\mathbf{D}({\bu}^{j})\xrightharpoonup[j\to\infty]{}  \nut^{(n)}(k)\mathbf{D}({\bu})\ \ \mbox{in}\ \ L^2(0,T;L^{2}(\Omega)^{d\times d}), && \\
& \label{w:conv:nuT(k)Dv:j:sqrt}
\sqrt{\nut^{(n)}(k^{j})}\mathbf{D}({\bu}^{j})\xrightharpoonup[j\to\infty]{}  \sqrt{\nut^{(n)}(k)}\mathbf{D}({\bu})\ \ \mbox{in}\ \ L^2(0,T;L^{2}(\Omega)^{d\times d}). &&
\end{alignat}

The convergence results (\ref{w*:conv:uj})-(\ref{w:conv:uj'}) and  (\ref{w:conv:ujxuj})-(\ref{w:conv:nuT(k)Dv:j}) are sufficient to pass the equation (\ref{weak-form-u:n:j}) to the limit $j\to\infty$.
In view of this, and by means of linearity and continuity, we can see that, for any fixed $n$, (\ref{weak-form-u:n}) holds true for any $\bv\in \mathbf{V}\cap L^\alpha(\Omega)^d$.
By a standard procedure (see e.g.~\cite[Lemma~III.1.2]{Temam:1979}), we can invoke (\ref{w*:conv:uj})-(\ref{w:conv:uj'}) and (\ref{w:conv:nuT(k)Dv:j:sqrt}) to prove that ${\bu}\in L^\infty(0,T;\mathbf{H})$ is weakly continuous with values in $\mathbf{H}$, i.e.
${\bu}\in C_{\rm{w}}([0,T];\mathbf{H})$, and hence (\ref{problem4:n})$_1$ is meaningful.

Taking $\bv=\bu(t)$ in (\ref{weak-form-u:n}), integrating the resulting identity between $0$ and $T$, using (\ref{problem2:n}) and (\ref{problem4:n})$_1$, and arguing as we did for \eqref{est2:eq:u:T}, we have
\begin{equation}
\begin{split}\label{weak-form-u:n:sqrt}
&
\int_0^T\left\|\sqrt{\nut^{(n)}(k(t))}\mathbf{D}(\bu(t))\right\|_2^2dt
+ c_{Fo}\int_0^T\|\bu(t)\|^{\alpha}_\alpha dt = \\
&
-\frac{1}{2}\|\bu(T)\|^2_2 + \frac{1}{2}\|\bu_{n,0}\|^2_2
-c_{Da}\int_0^T\|\bu(t)\|^2_2dt +
\int_0^T\int_\Omega\bg\cdot\bu\, dxdt.
\end{split}
\end{equation}

Integrating (\ref{est1:eq-u:j}) between $0$ and $T$, and using (\ref{eq:0:uj:kj})$_1$, one has
\begin{equation*}
\begin{split}
&
\int_0^T\left\|\sqrt{\nut^{(n)}(k^j(t))}\mathbf{D}(\bu^j(t))\right\|_2^2dt +
c_{Fo}\int_0^T\|\bu^j(t)\|^{\alpha}_\alpha dt = \\
&
 -\frac{1}{2}\|\bu^j(T)\|^2_2 + \frac{1}{2}\|\bu^j_0\|^2_2
 -c_{Da}\int_0^T\|\bu^j(t)\|^2_2dt
 + \int_0^T\int_\Omega\bg\cdot\bu^j\,dx dt.
\end{split}
\end{equation*}
Now, shifting the first term in the r.h.s. to the left, letting $j\to\infty$ and using the convergence results (\ref{sc:vn0})$_2$, (\ref{w:conv:uj}) and (\ref{str:conv:uj}), we obtain
\begin{equation}\label{lim:sup:1}
\begin{split}
&
\limsup_{j\to\infty}\left(
\int_0^T\left\|\sqrt{\nut^{(n)}(k^j(t))}\mathbf{D}(\bu^j(t))\right\|_2^2dt
+
c_{Fo}\int_0^T\|\bu^j(t)\|^{\alpha}_\alpha dt\right) +
\limsup_{j\to\infty}\left(\frac{1}{2}\|\bu^j(T)\|^2_2\right)
\leq \\
&
\frac{1}{2}\|\bu_{n,0}\|^2_2 - c_{Da}\int_0^T\|\bu(t)\|^2_2dt
 + \int_0^T\int_\Omega\bg\cdot\bu\,dx dt
\end{split}
\end{equation}
Observing that, due to (\ref{str:conv:uj}) and the lower semi-continuity of the norm,
\begin{equation*}
\frac{1}{2}\|\bu(T)\|^2_2\leq\limsup_{j\to\infty}\left(\frac{1}{2}\|\bu^j(T)\|^2_2\right),
\end{equation*}
we can plug (\ref{weak-form-u:n:sqrt}) into (\ref{lim:sup:1}), so that
\begin{equation}\label{lim:sup:2}
\begin{split}
&
\limsup_{j\to\infty}\left(
\int_0^T\left\|\sqrt{\nut^{(n)}(k^j(t))}\mathbf{D}(\bu^j(t))\right\|_2^2dt
+
c_{Fo}\int_0^T\|\bu^j(t)\|^{\alpha}_\alpha dt\right) \leq \\
&
\int_0^T\left\|\sqrt{\nut^{(n)}(k(t))}\mathbf{D}(\bu(t))\right\|_2^2dt
+
c_{Fo}\int_0^T\|\bu(t)\|^{\alpha}_\alpha dt.
\end{split}
\end{equation}

On the other hand, by (\ref{w:conv:f(uj)}), (\ref{w:conv:nuT(k)Dv:j:sqrt}) and the lower semi-continuity of the norms, there holds
\begin{equation}\label{lim:sup:3}
\begin{split}
&
\int_0^T\left\|\sqrt{\nut^{(n)}(k(t))}\mathbf{D}({\bu}(t))\right\|_2^2dt
+
c_{Fo}\int_0^T\|{\bu}(t)\|^{\alpha}_\alpha dt \leq \\
&
\liminf_{j\to\infty}\left(\int_0^T\left\|\sqrt{\nut^{(n)}(k^j(t))}\mathbf{D}({\bu}^j(t))\right\|_2^2dt
+
c_{Fo}\int_0^T\|{\bu}^j(t)\|^{\alpha}_\alpha dt\right).
\end{split}
\end{equation}

Then, combining (\ref{lim:sup:2}) with (\ref{lim:sup:3}), we obtain
\begin{equation*}
\nut^{(n)}(k^j)|\mathbf{D}({\bu}^j)|^2 + c_{Fo}|{\bu}^j|^{\alpha}
\xrightarrow[j\to\infty]{}\nut^{(n)}(k)|\mathbf{D}({\bu})|^2 + c_{Fo}|{\bu}|^{\alpha}\ \ \mbox{in}\ \ L^1(0,T;L^1(\Omega)),
\end{equation*}
which implies, by the uniqueness of the limit, that
\begin{alignat}{2}
\label{strong:conv:nu}
&
\nut^{(n)}(k^j)|\mathbf{D}({\bu}^j)|^2
\xrightarrow[j\to\infty]{}\nut^{(n)}(k)|\mathbf{D}({\bu})|^2\ \ \mbox{in}\ \ L^1(0,T;L^1(\Omega)), && \\
\label{strong:conv:alpha}
&
|{\bu}^j|^{\alpha}
\xrightarrow[j\to\infty]{} |{\bu}|^{\alpha}\ \ \mbox{in}\ \ L^1(0,T;L^1(\Omega)). &&
\end{alignat}

With respect to the turbulent diffusion term, by estimate (\ref{est:nD:grad:kj}), we can infer the existence of $\varpi\in L^\varrho(0,T;L^\varrho(\Omega))$ such that
\begin{equation}
\label{wc:nuD:kj:grad}
\nu_D^{(n)}(k^j)\nabla k^j \xrightharpoonup[j\to\infty]{} \varpi\ \ \mbox{in}\ \ L^\varrho(0,T;L^\varrho(\Omega)),\qquad
\varrho< \varrho_2,
\end{equation}
for $\varrho_2$ defined in \eqref{est:nD:grad:kj}.
Then, we observe that from (\ref{f:Lambda}) and (\ref{Bdn:norm:Lambda}) one has
\begin{equation}\label{wc:Lambda(kj):L2}
\Lambda(k^j) \xrightharpoonup[j\to\infty]{} \Lambda(k)\ \ \mbox{in}\ \ L^2(0,T;W^{1,2}_0(\Omega)).
\end{equation}
Combining assumption (\ref{e-visc-Carath}) with
(\ref{trunc:T})-(\ref{comp:n}) and (\ref{ae:conv:kj}), one has
\begin{equation}\label{ae:nuD(kj)}
\nu_D^{(n)}(k^j)\left(1+k^j\right)^{-\frac{\zeta-\delta-1}{2}}
\xrightarrow[j\to\infty]{}\nu_D^{(n)}(k)\left(1+k\right)^{-\frac{\zeta-\delta-1}{2}}\quad \mbox{a.e.\ \ in}\ \ Q_T
\end{equation}
for $\delta>0$ so small that $\zeta-\delta-1>0$.
Now, using assumption (\ref{f:diff-turb}) and (\ref{trunc:T})-(\ref{comp:n}), together with estimate (\ref{est:kj:rho:q<rk}), we can show that
\begin{equation}\label{bd:nuD(kj):2:b}
\begin{split}
&
\int_0^T\left\|\nu_D^{(n)}(k^j(t))\left(1+k^j(t)\right)^{-\frac{\zeta-\delta-1}{2}}\right\|_2^2dt
\leq C_1\left(1+\int_0^T\left\|k^j(t)\right\|_{\zeta+\delta+1}^{\zeta+\delta+1}dt\right) \leq \\
&
C_2\left[1+\left(\int_0^T\left\|k^j(t)\right\|_{r_k}^{r_k}dt\right)^{\frac{\zeta+\delta+1}{r_k}}\right]
\leq
K_1+\frac{K_2}{\delta}\quad \forall\ \delta>0\ \ \mbox{small},
\end{split}
\end{equation}
and for some positive constants $C_1$, $C_2$, $K_1$ and $K_2$.
Note that, in view of (\ref{ru:rk}), and for $\delta>0$ sufficiently small, $\zeta+\delta+1<r_k$.
Now, the Vitali-Hahn-Saks theorem, (\ref{ae:nuD(kj)}) and (\ref{bd:nuD(kj):2:b}) imply
\begin{equation}\label{sc:nuD(kj)}
\nu_D^{(n)}(k^j)\left(1+k^j\right)^{-\frac{\zeta-\delta-1}{2}}
\xrightarrow[j\to\infty]{}\nu_D^{(n)}(k)\left(1+k\right)^{-\frac{\zeta-\delta-1}{2}}\quad \mbox{in}\ \ L^2(0,T;L^2(\Omega)).
\end{equation}
As a consequence of (\ref{f:Lambda}), (\ref{wc:Lambda(kj):L2}) and (\ref{sc:nuD(kj)}), we can justify that
\begin{equation}\label{wc:nuD(kj):infty}
\begin{split}
& \int_0^T\int_\Omega  \nu_D^{(n)}(k^j)\nabla k^j\cdot\bm{\omega}\,dxdt = \int_0^T\int_\Omega \nu_D^{(n)}(k^j)\left(1+k^j\right)^{-\frac{\zeta-\delta-1}{2}}\nabla \Lambda(k^j)\cdot\bm{\omega}\,dxdt \\
\xrightarrow[j\to\infty]{}
& \int_0^T\int_\Omega \nu_D^{(n)}(k)\left(1+k\right)^{-\frac{\zeta-\delta-1}{2}}\nabla \Lambda(k)\cdot\bm{\omega}\,dxdt=
\int_0^T\int_\Omega  \nu_D^{(n)}(k)\nabla k\cdot\bm{\omega}\,dxdt
\end{split}
\end{equation}
for all $\bm{\omega}\in C^\infty_0((0,T)\times\Omega)^d$.
Hence, by virtue of the convergence (\ref{wc:nuD(kj):infty}) we can readily see that in (\ref{wc:nuD:kj:grad}) it must be $\varpi=\nu_D^{(n)}(k)\nabla k$, i.e.
\begin{equation}
\label{wc:nuD:kj:grad:lim}
\nu_D^{(n)}(k^j)\nabla k^j \xrightharpoonup[j\to\infty]{} \nu_D^{(n)}(k)\nabla k\ \ \mbox{in}\ \ L^\varrho(0,T;L^\varrho(\Omega)),\qquad
\varrho<\varrho_2.
\end{equation}

On the other hand, we can combine (\ref{ae:conv:uj}) and (\ref{ae:conv:kj}) with (\ref{est:kj:uj}) so that
\begin{equation}
\label{wc:kjuj:varsigma}
k^{j}{\bu}^{j} \xrightharpoonup[j\to\infty]{} k{\bu}\ \ \mbox{in}\ \ L^\varrho(0,T;L^\varrho(\Omega)),\quad \varrho<\varrho_3,
\end{equation}
for $\varrho_3$ defined in \eqref{est:kj:uj:rho}.

Next, we combine assumptions (\ref{e-visc-Carath}) and (\ref{Hyp:theta:gamma}) with (\ref{trunc:T})-(\ref{comp:n}), (\ref{est:nuPn:kj:uj:i}) and (\ref{ae:conv:uj})-(\ref{ae:conv:kj}), to show that
\begin{alignat}{2}
\label{ae:conv:nuP(kj)ub:j}
&
\nu_P^{(n)}(k^{j})|{\bu}^{j}|^\beta\xrightarrow[l\to\infty]{}\nu_P^{(n)}(k)|{\bu}|^\beta\quad \mbox{a.e.\ \ in}\ \ Q_T, \\
\label{bd:nuP(kj)ujb}
&
\int_0^T\int_\Omega \left|\nu_P^{(n)}(k^{j})|{\bu}^{j}|^\beta\right|^\varrho dxdt\leq C,\qquad \varrho<\varrho_4,
\end{alignat}
for some positive constant $C$ not depending on $j$ (nor on $n$), and where $\varrho_4$ is given by \eqref{est:nuPn:kj:uj:r}.
In view of (\ref{ae:conv:nuP(kj)ub:j}) and (\ref{bd:nuP(kj)ujb}), we can use once more the Vitali-Hahn-Saks theorem so that
\begin{equation}\label{w:conv:nuP(kj)ujb}
\nu_P^{(n)}(k^{j})|{\bu}^{j}|^\beta\xrightarrow[j\to\infty]{}  \nu_P^{(n)}(k)|{\bu}|^\beta\ \ \mbox{in}\ \ L^{\varrho}(0,T;L^{\varrho}(\Omega)),\qquad \varrho<\varrho_4.
\end{equation}

Regarding the turbulent dissipation term, we can deduce from (\ref{e-visc-Carath}) and (\ref{ae:conv:kj}) that
\begin{equation}\label{ae:conv:vare(kj)}
\varepsilon(k^j)\xrightarrow[j\to\infty]{}\varepsilon(k)\quad \mbox{a.e.\ \ in}\ \ Q_T.
\end{equation}
And by using assumptions (\ref{f:dissip-turb}) and (\ref{eq:Cond2}), together with estimate (\ref{est:kj:rho:q<rk}), we can prove that
\begin{equation}\label{bd:vare(kj):q}
\int_0^T\int_\Omega \big|\varepsilon(k^j)\big|^{\varrho}dxdt\leq
C\int_0^T\int_\Omega \big|k^j\big|^{\varrho(\vartheta+1)}dxdt\leq \frac{K}{\delta}\quad \forall\ \delta>0\ \ \mbox{small},\quad \varrho<\varrho_5:=\frac{d\zeta +d+2}{d(\vartheta+1)},
\end{equation}
for some positive constants $C$ and $K$.
Using again the Vitali-Hahn-Saks theorem, we can see that (\ref{ae:conv:vare(kj)}) and (\ref{bd:vare(kj):q}) imply
\begin{equation}\label{sc:conv:vare(kj)}
\varepsilon(k^j)\xrightarrow[j\to\infty]{}\varepsilon(k)\quad \mbox{in}\ \ L^{\varrho}(0,T;L^{\varrho}(\Omega)),\qquad \varrho<\varrho_5.
\end{equation}
Note that, due to assumption (\ref{eq:Cond2}), $\varrho_5>1$.

Now, using the convergence results (\ref{w:conv:kj:W1q}), (\ref{w:conv:kj'}), (\ref{str:conv:uj}), (\ref{strong:conv:nu}), (\ref{wc:nuD:kj:grad:lim}), (\ref{wc:kjuj:varsigma}), (\ref{sc:conv:vare(kj)}) and (\ref{w:conv:nuP(kj)ujb}), we can pass (\ref{weak-form-k:tr:j:w}) to the limit $j\to\infty$ so that (\ref{weak-form-k:n}) holds true for any $w\in W^{1,\infty}_0(\Omega)$.

Moreover, reasoning as we did for \eqref{eq:0:uj:kj}, we also can show that
\begin{equation}\label{eq:0:un:kn}
\bu(0)=\bu_{n,0}\quad\mbox{and}\quad  k(0)=k_{n,0}\quad\mbox{a.e. in}\ \Omega.
\end{equation}
The proof of Proposition~\ref{prop:exist:n} is thus concluded.
\end{proof}

In the final two sections we will conclude the proof Theorem~\ref{thm:exist}.
Note that, right at the beginning of the proof of Proposition~\ref{prop:exist:n}, we discarded the subscript $n$, which will now be recovered so that we can proceed.

\section{Passing to the limit as $n\to\infty$}\label{Sect-Est-ind(n)}  

\begin{proof}(Concluding the proof of Theorem~\ref{thm:exist})
From Proposition~\ref{prop:exist:n}, we know that for each $n\in\mathds{N}$ there exists a couple $({\bu}_n,k_n)$ of functions such that
\eqref{weak-form-u:n} and \eqref{weak-form-k:n} are satisfied.
Using continuity arguments, integration in-time of \eqref{weak-form-u:n} and \eqref{weak-form-k:n}, and \eqref{eq:0:un:kn}, we can see that
\begin{equation}
\begin{split}\label{weak-form-un}
& -\int_{0}^{T}\int_{\Omega} \bu_n\cdot\partial_t\bm{\varphi}\,dxdt - \int_{0}^{T}\int_{\Omega}\Phi_n(|\bu_n|^2)\bu_n\otimes\bu_n:\nabla\bm{\varphi}\,dxdt \\
&  +\int_{0}^{T}\int_{\Omega}\nut^{(n)}(k_n)\,\mathbf{D}(\bu_n):\nabla\bm{\varphi}\,dxdt
   +\int_{0}^{T}\int_{\Omega}\left(c_{Da}+c_{Fo}|\bu_n|^{\alpha-2}\right)\bu_n\cdot\bm{\varphi}\,dxdt = \\
&
\int_\Omega\bu_{n,0}\cdot\bm{\varphi}(0)\,dx + \int_{0}^{T}\int_{\Omega}\bg\cdot\bm{\varphi}\,dxdt
\end{split}
\end{equation}
and
\begin{equation}\label{weak-form-kn}
\begin{split}
& -\int_{0}^{T}\int_{\Omega} k_n\partial_t\omega\,dxdt - \int_{0}^{T}\int_{\Omega}k_n\bu_n\cdot\nabla\omega\,dxdt
  + \int_{0}^{T}\int_{\Omega}\nu_D^{(n)}(k_n)\nabla k_n\cdot\nabla\omega\,dxdt  \\
& + \int_{0}^{T}\int_{\Omega}\varepsilon(k_n)\omega\,dxdt = \\
& \int_\Omega k_{n,0}\omega(0)\,dx + \int_{0}^{T}\int_{\Omega}\nut^{(n)}(k_n)|\mathbf{D}(\bu_n)|^2\omega\,dxdt + \int_{0}^{T}\int_{\Omega}\nu_P^{(n)}(k_n)|\bu_n|^\beta \omega\,dxdt
\end{split}
\end{equation}
are verified for all
$\bm{\varphi}\in C^\infty(Q_T)^d$, with $\operatorname{div}\bm{\varphi}=0$ in $Q_T$ and $\operatorname{supp}\bm{\varphi}\subset\subset\Omega\times[0,T)$, and for all $\omega\in C^\infty(Q_T)$, with $\omega\geq 0$ in $Q_T$ and $\operatorname{supp}\omega\subset\subset\Omega\times[0,T)$.

Using \eqref{weak-form-u:n} and \eqref{weak-form-k:n}, and proceeding as we did in the previous sections, we can show the estimates \eqref{est2:eq:u:T}-\eqref{est4:eq-u:jl}, \eqref{est:kj:infty:sup:1}, (\ref{est:grad:kj:L2:0})-(\ref{est:grad:kj:Lq:0}), \eqref{est:kj:rho:q<rk}, \eqref{est:nD:grad:kj}, \eqref{est:kj:uj}, \eqref{est:nuPn:kj:uj:i} and \eqref{est:indep:jn:3:0} hold for $\bu_n$ and $k_n$.
As a consequence, and in view of the Banach-Alaoglu theorem, we have for some subsequences
\begin{alignat}{2}
 & \label{w*:conv:un}
\bu_n\xrightharpoonup[n\to\infty]{\ast} \bu\ \ \mbox{in}\ \ L^\infty(0,T;\mathbf{H}), && \\
  & \label{w:conv:un}
\bu_n\xrightharpoonup[n\to\infty]{} \bu,\ \ \mbox{in}\ L^2(0,T;\mathbf{V})\cap L^{r_u}(0,T;L^{r_u}(\Omega)^d),\quad
\mbox{for $r_u$ given in (\ref{ru:rk})}, && \\
& 
k_n\xrightharpoonup[n\to\infty]{\ast} k\ \ \mbox{in}\ \ L^\infty(0,T;\mathcal{M}(\Omega)), && \nonumber \\
  & \label{w:conv:kn:W1q}
k_n\xrightharpoonup[n\to\infty]{} k\ \ \mbox{in}\ \ L^q(0,T;W^{1,q}_0(\Omega)),\qquad
1< q<\min\left\{2,1+\frac{d\zeta + 1}{d+1}\right\}, && \\
 & \label{w:conv:kn:rho:theta}
k_n\xrightharpoonup[n\to\infty]{} k\ \ \mbox{in}\ \ L^r(0,T;L^r(\Omega))\cap L^{\vartheta+1}(0,T;L^{\vartheta+1}(\Omega)),\quad 1\leq r<r_k,\quad
\mbox{for $r_k$ given in (\ref{ru:rk})}, \\
  & \label{w:conv:kn'}
\partial_tk_n\xrightharpoonup[n\to\infty]{} \partial_tk\ \ \mbox{in}\ \ \mathcal{M}(0,T;W^{-1,\varrho}(\Omega)),\quad 1<\varrho<\varrho_0,\quad \mbox{for $\varrho_0$ given in \eqref{varrho:0}}.
\end{alignat}
From \eqref{w:conv:kn'} one immediately has (9) of Theorem~\ref{thm:exist}.

On the other hand, using the Hölder inequality and assumption (\ref{f:visc-turb}), together with (\ref{trunc:T})-(\ref{comp:n}), and with the counterparts of (\ref{est2:eq:u:T}), (\ref{est:kj:infty:sup:1}) and (\ref{est:kj:rho:q<rk}), one has
    \begin{equation}\label{eq:est:Dun:q}
    \begin{split}
    & \int_0^T\left\|\nut^{(n)}(k_n(t))\,\mathbf{D}({\bu}_n(t))\right\|_{\varsigma}^{\varsigma}dt\leq \\
    &
    \left(\int_0^T\left\|\sqrt{\nut^{(n)}(k_n(t))}\mathbf{D}({\bu}_n(t))\right\|_{2}^{2}dt\right)^{\frac{1}{2}}
    \left(\int_0^T\left\|\sqrt{\nut^{(n)}(k_n(t))}\right\|_{\frac{2r}{\eta}}^{\frac{2r}{\eta}}dt\right)^{\frac{\eta}{2r}}\leq \\
    &  C_1\left(1+\int_0^T\left\|k_n(t)\right\|_{r}^{r}dt\right)^{\frac{\eta}{2r}}\leq
    \left\{
    \begin{array}{ll}
      C, & \mbox{if}\ r=\vartheta+1,\ \ \mbox{or} \\
      K_1 + \frac{K_2}{\delta}\quad \forall\ \delta>0\ \ \mbox{small}, &  \mbox{if}\ r<r_k,
    \end{array}
    \right.
    \end{split}
    \end{equation}
    for some positive constants $C_1$, $C$, $K_1$ and $K_2$ not depending on $n$, where $r_k$ is given in (\ref{ru:rk}) and
    \begin{equation}\label{eq:est:Dun:qk}
    \frac{1}{2}+\frac{\eta}{2r}=\frac{1}{\varsigma} \Leftrightarrow \varsigma=2-\frac{2\eta}{r+\eta}< \varsigma_1:=
    \left\{
    \begin{array}{ll}
    2-\frac{2\eta}{\vartheta+1+\eta}, & \mbox{if}\ r=\vartheta+1,\ \ \mbox{or} \\
    2-\frac{2d\eta}{d\zeta+d\eta+d+2}, & \mbox{if}\ r<r_k.
    \end{array}
    \right.
    \end{equation}
    Note that assumptions (\ref{eq:Cond1}) and (\ref{eq:Cond2}) assure that $\varsigma_1>1$ in any case.

\begin{remark}
For the exponent $\varsigma_1$ set in (\ref{eq:est:Dun:qk}), we have in the dimensions of physics interest
\begin{equation*}
\varsigma_1:=
\left\{
\begin{array}{ll}
\max\left\{\frac{2(\vartheta+1)}{\vartheta+1+\eta},\frac{2(3\zeta+5)}{3\zeta+3\eta+5}\right\}, & \mbox{if}\ d=3, \\
\max\left\{\frac{2(\vartheta+1)}{\vartheta+1+\eta},\frac{2(\zeta+2)}{\zeta+\eta+2}\right\}, & \mbox{if}\ d=2.
\end{array}
\right.
\end{equation*}
In the particular case of $d=3$, $\varsigma_1=\frac{2(3\zeta+5)}{3\zeta+3\eta+5}$ if and only if $\vartheta<\zeta+\frac{2}{3}$, which was one of the main assumptions of \cite{BLM:2011}.
In our case, would be
$\varsigma_1=\frac{2(d\eta+d+2)}{d\zeta+d\eta+d+2}$ and $\vartheta<\zeta+\frac{2}{d}$, which is assured by assumption (\ref{eq:Cond2}).
However, and contrary to (\ref{bd:vare(kj):q}), this hypothesis is used here more to simplify the presentation than a real need for our analysis.
\end{remark}

From \eqref{weak-form-u:n} we can infer that for all $t\in(0,T)$
\begin{equation}\label{eq:dt:u:n}
\begin{split}
\partial_t\bu_n(t)= &
-\mathbf{div}\big(\Phi_n(|\bu_n(t)|^2)\bu_n(t)\otimes\bu_n(t)\big) +
\mathbf{div}\big(\nut(k_n(t))\,\mathbf{D}(\bu_n(t))\big) \\
&
-\left(c_{Da}+c_{Fo}|\bu_n(t)|^{\alpha-2}\right)\bu_n(t) + \mathbf{g}(t)
\end{split}
\end{equation}
holds in the distribution sense on $\mathbf{Y}'$,
where $\mathbf{Y}'$ denotes the dual space of
\begin{equation*}
\mathbf{Y}:=\mathbf{V}\cap L^\alpha(\Omega)^d.
\end{equation*}
From \eqref{eq:est:Dun:q}, one immediately has
    \begin{equation}\label{eq:1:p:n}
    \int_0^T\left\|\mathbf{div}\Big(\nut^{(n)}(k_n(t))\,\mathbf{D}({\bu}_n(t))\Big)\right\|_{W^{-1,\varsigma}(\Omega)^d}^{\varsigma}dt\leq C,\qquad
    1<\varsigma<\varsigma_1,
    \end{equation}
for $\varsigma_1$ defined in \eqref{eq:est:Dun:qk}.

By using \eqref{Phi}-\eqref{Phi:d}, the Hölder inequality and the counterpart of \eqref{est4:eq-u:jl}, we can show that
\begin{equation}\label{est:uxu:uj'}
\int_0^T\left\|\mathbf{div}\left(\Phi_n(|\bu_n(t)|^2)\bu_n(t)\otimes\bu_n(t)\right)\right\|_{W^{-1,\varsigma}(\Omega)^d}^{\varsigma}dt\leq
C,\quad 1<\varsigma\leq\varsigma_2:=
\left\{
\begin{array}{ll}
1+\frac{2}{d}, & \mbox{if}\ r_u=\frac{2(d+2)}{d}, \\
\frac{\alpha}{2}, & \mbox{if}\ r_u=\alpha,
\end{array}
\right.
\end{equation}
for some positive constant $C$.
Note that if $r_u=\alpha$, then $\alpha>2$ and consequently $\varsigma_2>1$ in either cases.

By the same reasoning,
\begin{equation}\label{eq:3:p:n}
\begin{split}
&
\int_0^T\left\|{\bu}_n(t)\right\|_{W^{-1,\varsigma_3}(\Omega)^d}^{\varsigma_3}dt
+
\int_0^T\left\||{\bu}_n(t)|^{\alpha-2}{\bu}_n(t)\right\|_{W^{-1,\varsigma_4}(\Omega)^d}^{\varsigma_4}dt\leq C,\quad
\varsigma_3:=r_u,\quad \varsigma_4:=\frac{r_u}{\alpha-1},
\end{split}
\end{equation}
for some positive constant $C$.

Finally, by the Hölder inequality and assumption (\ref{g:V'}), we have
\begin{equation}\label{eq:4:p:n}
\int_0^T\left\|\mathbf{g}(t)\right\|_{W^{-1,\varsigma_5}(\Omega)^d}^{\varsigma_5}dt\leq C,\qquad \varsigma_5:=2.
\end{equation}

Now, using \eqref{eq:dt:u:n} and (\ref{eq:1:p:n})-\eqref{eq:4:p:n}, one has
\begin{equation}\label{est:dt:u:n}
\int_0^T\left\|\partial_t\bu_n(t)\right\|_{W^{-1,\varsigma}(\Omega)^d}^{\varsigma}dt \leq C,\quad
1<\varsigma<\varsigma_0:=\min\left\{\varsigma_1,\varsigma_2,\varsigma_3,\varsigma_4,\varsigma_5\right\}.
\end{equation}
Attending to \eqref{ru:rk} and assumption \eqref{eq:Cond2}, the expression for $\varsigma_0$ simplifies as follows,
\begin{equation}\label{varsigma:0}
\varsigma_0:=\min\left\{
\frac{2\left(d\zeta+d+2\right)}{d\zeta+d\eta+d+2},
\max\left\{1+\frac{2}{d},\frac{\alpha}{2}\right\},
\frac{\max\left\{\frac{2(d+2)}{d},\alpha\right\}}{\alpha-1}\right\}
\end{equation}
Observe that $\varsigma_1,\ \varsigma_2,\ \varsigma_3,\ \varsigma_4,\ \varsigma_5>1$ and thus $\varsigma:1<\varsigma<\varsigma_0$ can be chosen.

Combining \eqref{est:dt:u:n} with the Banach-Alaoglu theorem, we also have for some subsequence
\begin{equation}\label{w:conv:un'}
\partial_t\bu_n\xrightharpoonup[n\to\infty]{} \partial_t\bu\ \ \mbox{in}\ \ L^{\varsigma}(0,T;W^{-1,\varsigma}(\Omega)^d),\qquad
1<\varsigma<\varsigma_0,
\end{equation}
which proves (8) of Theorem~\ref{thm:exist}.

Taking into account \eqref{est:indep:jn:3:0} (with $k_n$ in the place of $k^j$) and \eqref{est:dt:u:n}, we can justify, arguing similarly as we did for obtaining  \eqref{str:conv:uj}-\eqref{str:conv:kj} and
\eqref{ae:conv:uj}-\eqref{ae:conv:kj}, the existence of subsequences such that
\begin{alignat}{2}
& \nonumber
\bu_n\xrightarrow[n\to\infty]{}\bu\ \ \mbox{in}\ \ L^q(0,T;L^q(\Omega)^d),\qquad 1\leq q< r_u, \\
& \nonumber
k_n\xrightarrow[n\to\infty]{}k\ \ \mbox{in}\ \ L^q(0,T;L^q(\Omega)),\qquad 1\leq q<r_k, \\
& \label{ae:conv:un}
\bu_n\xrightarrow[n\to\infty]{}\bu\quad \mbox{a.e.\ \ in}\ \ Q_T,  && \\
& \label{ae:conv:kn}
k_n\xrightarrow[n\to\infty]{}k\quad \mbox{a.e.\ \ in}\ \ Q_T. &&
\end{alignat}
And, similarly to (\ref{w:conv:ujxuj})-(\ref{w:conv:nuT(k)Dv:j:sqrt}), we can use the convergence results \eqref{w:conv:un}, \eqref{ae:conv:un} and \eqref{ae:conv:kn}, to show that
\begin{alignat}{2}
& \label{w:conv:unxun}
\Phi_n(|u_n|^2)\bu_n\otimes\bu_n\xrightharpoonup[n\to\infty]{} \bu\otimes\bu\ \ \mbox{in}\ \ L^{\frac{d+2}{d}}(0,T;L^{\frac{d+2}{d}}(\Omega)^{d\times d}), && \\
& \label{w:conv:f(un)}
|\bu_n|^{\alpha-2}\bu_n\xrightharpoonup[n\to\infty]{} |\bu|^{\alpha-2}\bu
\ \ \mbox{in}\ \ L^{\alpha'}(0,T;L^{\alpha'}(\Omega)^d), && \\
& \label{w:conv:nuT(k)Dv:n}
\nut^{(n)}(k_n)\mathbf{D}(\bu_n)\xrightharpoonup[n\to\infty]{}  \nut(k)\mathbf{D}(\bu)\ \ \mbox{in}\ \ L^2(0,T;L^{2}(\Omega)^{d\times d}), \\
& \label{w:conv:nuT(k)Dv:n:sqrt}
\sqrt{\nut^{(n)}(k_n)}\mathbf{D}(\bu_n)\xrightharpoonup[n\to\infty]{}  \sqrt{\nut(k)}\mathbf{D}(\bu)\ \ \mbox{in}\ \ L^2(0,T;L^{2}(\Omega)^{d\times d}). &&
\end{alignat}
Note that in the convergence result \eqref{w:conv:unxun}, we also have used the definition of the function $\Phi$ given at \eqref{Phi}-\eqref{Phi:d}.

Then, passing the equation \eqref{weak-form-un} to the limit $n\to\infty$, using for that purpose the convergence results \eqref{conv:un:u0}, \eqref{w:conv:un}, and \eqref{w:conv:unxun}-\eqref{w:conv:nuT(k)Dv:n}, we prove the validity of \eqref{weak-form-u}.

On the other hand, arguing exactly as we did for \eqref{wc:nuD:kj:grad:lim}, \eqref{wc:kjuj:varsigma}, \eqref{w:conv:nuP(kj)ujb}, and \eqref{sc:conv:vare(kj)}, using in this case \eqref{ae:conv:un}-\eqref{ae:conv:kn} instead, we can show that
\begin{alignat}{2}
& \label{wc:nuD:kn:grad:lim}
\nu_D^{(n)}(k_n)\nabla k_n \xrightharpoonup[n\to\infty]{} \nu_D(k)\nabla k\ \ \mbox{in}\ \ L^\varrho(0,T;L^\varrho(\Omega)),\qquad
1<\varrho<\varrho_2, && \\
& \label{wc:knun:varsigma}
k_n\bu_n \xrightharpoonup[n\to\infty]{} k\bu\ \ \mbox{in}\ \ L^\varrho(0,T;L^\varrho(\Omega)),\qquad
1<\varrho<\varrho_3, && \\
& \label{w:conv:nuP(kn)ujb}
\nu_P^{(n)}(k_n)|\bu_n|^\beta\xrightarrow[n\to\infty]{}  \nu_P(k)|\bu|^\beta\ \ \mbox{in}\ \ L^\varrho(0,T;L^\varrho(\Omega)),\qquad 1<\varrho<\varrho_4, && \\
& \label{sc:conv:vare(kn)}
\varepsilon(k_n)\xrightarrow[n\to\infty]{}\varepsilon(k)\quad \mbox{in}\ \ L^\varrho(0,T;L^\varrho(\Omega)),\qquad 1<\varrho<\varrho_5, &&
\end{alignat}
where $\varrho_2$, $\varrho_3$, $\varrho_4$ and $\varrho_5$ are defined in \eqref{est:nD:grad:kj}, \eqref{est:kj:uj:rho}, \eqref{est:nuPn:kj:uj:r} and \eqref{bd:vare(kj):q}, respectively.

Moreover, \eqref{w:conv:nuT(k)Dv:n:sqrt} and the weak lower semicontinuity of the norm imply
\begin{equation}\label{wlsc:L2norm:n}
\int_{0}^{T}\int_\Omega \nut(k)|\bD(\bu)|^2\omega\,dxdt \leq
\liminf_{n\to\infty}\int_{0}^{T}\int_\Omega \nut^{(n)}(k_n)|\bD(\bu_n)|^2\omega\,dxdt.
\end{equation}
Finally, using the convergence results \eqref{sc:rn00}, \eqref{w:conv:kn:rho:theta}, \eqref{wc:nuD:kn:grad:lim}-\eqref{sc:conv:vare(kn)} and \eqref{wlsc:L2norm:n}, we can pass \eqref{weak-form-kn} to the limit $n\to\infty$ and we obtain \eqref{weak-form-k}.

\section{Attainment of the initial conditions}\label{Sect-Att-ic}

Similarly to (\ref{eq:0:un:kn})$_1$, we can use \eqref{weak-form-u:n} and invoke (\ref{w*:conv:un})-(\ref{w:conv:un}), (\ref{w:conv:un'}) and (\ref{w:conv:nuT(k)Dv:n:sqrt}) to prove that $\bu\in L^\infty(0,T;\mathbf{H})$ is weakly continuous with values in $\mathbf{H}$, and $\bu(0)=\bu_0$ in the sense of \eqref{attain:ic:u0k0}.

Repeating the same arguments used to show  \eqref{kj>=0} and \eqref{epsilon(kj)>=0}, we can prove that
\begin{equation}\label{e:epsilon(kn)>=0}
k_n\geq 0,\quad
\varepsilon(k_n)\geq 0\quad\mbox{a.e. in}\ Q_T.
\end{equation}
To prove that
\begin{equation}\label{eq:k0:k(0)}
k(0)=k_{0},
\end{equation}
we start by integrating (\ref{weak-form-k:n}) between $0$ and $t\in(0,T)$, next arguing as we did for proving (\ref{est2:eq-k:a:sup}), and then using (\ref{eq:0:un:kn})$_2$ and \eqref{e:epsilon(kn)>=0}, we arrive at
\begin{equation*}
\|\mathcal{H}_1(k_n(t))\|_1  \leq
\|\mathcal{H}_1(k_{n,0})\|_1 +
\int_0^t\int_{\Omega}\nut^{(n)}(k_n)|\mathbf{D}(\bu_n)|^2dxd\tau + \int_0^t\int_{\Omega}\nu_P^{(n)}(k_n)|\bu_n|^\beta dxd\tau,
\end{equation*}
where $\mathcal{H}_1$ is the function defined in (\ref{funct:H1}).
Combining the Fatou lemma with (\ref{sc:rn00}), (\ref{ae:conv:kn}), (\ref{w:conv:nuP(kn)ujb}) and (\ref{wlsc:L2norm:n}), we obtain
\begin{equation*}
\|\mathcal{H}_1(k(t))\|_1  \leq
\|\mathcal{H}_1(k_0)\|_1 +
\int_0^t\int_{\Omega}\nut(k)|\mathbf{D}(\bu)|^2dxd\tau + \int_0^t\int_{\Omega}\nu_P(k)|\bu|^\beta dxd\tau.
\end{equation*}
Hence,
\begin{equation}\label{limit:sup:t0+}
\limsup_{t\to 0^+}\|\mathcal{H}_1(k(t))\|_1  \leq
\|\mathcal{H}_1(k_0)\|_1.
\end{equation}
We now test (\ref{weak-form-k:tr:j:w}) with
\begin{equation}\label{test:f:om}
\omega=\mathcal{T}_1(k_n)\mathcal{H}_1(k_n)^{-\frac{1}{2}}\phi, \quad \phi\in C^\infty_0(\Omega),\quad \phi\geq 0\ \ \mbox{a.e. in}\ \ \Omega,
\end{equation}
where $\mathcal{T}_1(k_n)$ is the truncation of $k_n$ defined in (\ref{trunc:T}) for $n=1$ and $\mathcal{H}_1$ is the primitive function of $\mathcal{T}_1$ defined in (\ref{funct:H1}).
Note that, since $\phi\in C^\infty_0(\Omega)$ and
\begin{equation*}
\mathcal{T}_1(k_n)\mathcal{H}_1(k_n)^{-\frac{1}{2}}=
\left\{
\begin{array}{cc}
\sqrt{2}, & k_n<1, \\
0, & k_n\geq 1,
\end{array}
\right.
\end{equation*}
the function $\omega$ given by (\ref{test:f:om}) is in fact an admissible test function.
Integrating the resulting equation between $0$ and $t$, and proceeding as in~\cite{O:2024a}, we obtain
\begin{equation*}
\label{leq-k_n:T1:H1}
\begin{split}
& \int_{\Omega}\mathcal{H}_1(k_n(t))^{\frac{1}{2}}\phi\,dx -
  \int_0^t\int_{\Omega}\mathcal{H}_1(k_n)^{\frac{1}{2}}\bu_n\cdot\nabla \phi\,dxd\tau + \\
&
\frac{1}{2}\int_0^t\int_{\Omega}\nu_D^{(n)}(k_n)\mathcal{T}_1(k_n)\mathcal{H}_1(k_n)^{-\frac{1}{2}}\nabla k_n\cdot\nabla\phi\,dxd\tau
+ \frac{1}{2}\int_0^t\int_{\Omega}\varepsilon(k_n)\mathcal{T}_1(k_n)\mathcal{H}_1(k_n)^{-\frac{1}{2}}\phi\,dxd\tau
\\
\geq &
\int_{\Omega}\mathcal{H}_1(k_n(0))^{\frac{1}{2}}\phi\,dx.
\end{split}
\end{equation*}
Using (\ref{eq:0:un:kn})$_2$, together with (\ref{sc:rn00}), (\ref{w:conv:kn:rho:theta}), (\ref{wc:nuD:kn:grad:lim}), (\ref{wc:knun:varsigma}) and (\ref{sc:conv:vare(kn)}), we get
\begin{equation*}
\begin{split}
& \int_{\Omega}\mathcal{H}_1(k(t))^{\frac{1}{2}}\phi\,dx -
  \int_0^t\int_{\Omega}\mathcal{H}_1(k)^{\frac{1}{2}}{\bu}\cdot\nabla \phi\,dxd\tau + \\
&
\frac{1}{2}\int_0^t\int_{\Omega}\nu_D^{(n)}(k)\mathcal{T}_1(k)\mathcal{H}_1(k)^{-\frac{1}{2}}\nabla k\cdot\nabla\phi\,dxd\tau
+ \frac{1}{2}\int_0^t\int_{\Omega}\varepsilon(k)\mathcal{T}_1(k)\mathcal{H}_1(k)^{-\frac{1}{2}}\phi\,dxd\tau
\\
\geq &
\int_{\Omega}\mathcal{H}_1(k_{n,0})^{\frac{1}{2}}\phi\,dx
\end{split}
\end{equation*}
for a.e. $t\in(0,T)$.
Taking the $\liminf$, as $t\to 0^+$, and using a density argument, one has
\begin{equation}\label{leq:lim:inf}
\begin{split}
& \liminf_{t\to 0^+}\int_{\Omega}\mathcal{H}_1(k(t))^{\frac{1}{2}}\phi\,dx \geq
\int_{\Omega}\mathcal{H}_1(k_0)^{\frac{1}{2}}\phi\,dx\quad \forall\ \phi\in\ L^2(\Omega),
\end{split}
\end{equation}
with $\phi\geq 0$ a.e. in $\Omega$.
Now, we can use (\ref{limit:sup:t0+}) and (\ref{leq:lim:inf}), together with the properties of $\limsup$ and $\liminf$, and with (\ref{e:epsilon(kn)>=0})$_1$ and (\ref{sc:rn00}), to prove that
\begin{equation}\label{leq:lim:inf:sup}
\begin{split}
& \lim_{t\to 0^+}\left\|\mathcal{H}_1(k(t))^{\frac{1}{2}}-\mathcal{H}_1(k_0)^{\frac{1}{2}}\right\|_2^2
= \\
& \lim_{t\to 0^+}\left(\left\|\mathcal{H}_1(k(t))\right\|_1 + \left\|\mathcal{H}_1(k_0)\right\|_1 -
2\int_{\Omega}\mathcal{H}_1(k(t))^{\frac{1}{2}}\mathcal{H}_1(k_0)^{\frac{1}{2}}dx\right) \leq \\
& \limsup_{t\to 0^+}\left\|\mathcal{H}_1(k(t))\right\|_1 + \left\|\mathcal{H}_1(k_0)\right\|_1
-2\liminf_{t\to 0^+}\int_{\Omega}\mathcal{H}_1(k(t))^{\frac{1}{2}}\mathcal{H}_1(k_0)^{\frac{1}{2}}dx \leq \\
 &
\left\|\mathcal{H}_1(k_0)\right\|_1 + \left\|\mathcal{H}_1(k_0)\right\|_1
-2\int_{\Omega}\mathcal{H}_1(k_0)dx=0.
\end{split}
\end{equation}
As a consequence of (\ref{leq:lim:inf:sup}), we achieve to $k(0)=k_0$ in the sense of \eqref{attain:ic:u0k0},
which concludes the proof of Theorem~\ref{thm:exist}.
\end{proof}

\subsection*{Acknowledgments}
The author was partially supported by the Portuguese Foundation for Science and Technology, Portugal, under the project: UIDB/04561/2020.

\end{document}